\newtheorem{theorem}{Theorem}[section]
\newtheorem{corollary}[theorem]{Corollary}
\newtheorem{lemma}[theorem]{Lemma}
\begin{document}

\title{%Identities for 
Polynomial Moments with a weighted Zeta Square measure on the critical line}

\author{Sébastien Darses \and Erwan Hillion}

\address{IRL CRM--CNRS, Université de Montréal \and Aix-Marseille Universit\'e, cnrs, Centrale Marseille, I2M, Marseille, France} 
\email{sebastien.darses@univ-amu.fr}

\address{Aix-Marseille Universit\'e, cnrs, Centrale Marseille, I2M, Marseille, France}
\email{erwan.hillion@univ-amu.fr}

\date{}

%\dedicatory{\flushright \`A  Vincent Alouin.}

\maketitle

\begin{abstract}
    We prove closed-form identities for the sequence of moments  \\ $\int t^{2n}|\Gamma(s)\zeta(s)|^2dt$ on the whole critical line $s=1/2+it$. They are finite sums involving binomial coefficients, Bernoulli numbers, Stirling numbers and $\pi$, especially featuring the numbers $\zeta(n)B_n/n$ unveiled by Bettin and Conrey in \cite{BC13a, BC13b}. %This work is deeply connected to Bettin-Conrey's work \cite{BC13}
    Their main power series identity \cite{BC13b}, together with \cite{DH21a}, allows for a short proof of an auxiliary result: the computation of the $k$-th derivatives at $1$ of the "exponential auto-correlation" function studied in \cite{DH21a}. We also provide an elementary and self-contained proof of this secondary result. The starting point of our work is a remarkable identity proven by Ramanujan in 1915. %today interpreted as a Mellin-Plancherel isometry involving the $\Gamma$ and $\zeta$ functions. 
    The sequence of moments studied here, not to be confused with the moments of the Riemann zeta function, entirely characterizes $|\zeta|$ on the critical line. They arise in some generalizations of the Nyman-Beurling criterion, but might be of independent interest for %various other applications, as well as for 
    the numerous connections concerning the above mentioned numbers.
\end{abstract}

\footnote{\textit{Keywords:}  Riemann zeta function; Weighted moments; Gamma function; Polynomials; Bernoulli numbers; Stirling numbers; Nyman-Beurling criterion.}

\section{Introduction}

\subsection{Main result}
%We denote by $S_1(n,k)$ the signed Stirling number of the first kind, and 
The Gamma function and the Riemann zeta function are defined as usual as:
\bean
\Gamma(s) & = & \int_0^\infty e^{-x}x^{s-1}dx, \quad \mathfrak{R}(s)>0,\\
\zeta(s) & = &\sum_{n=1}^\infty \frac{1}{n^s}, \quad \mathfrak{R}(s)>1,
\eean
and by analytic continuation. %to the complex plane.
We often write $\Gamma\zeta(s)=\Gamma(s)\zeta(s)$, for some complex number $s$, with real part $\mathfrak{R}(s)$. Throughout the paper, we use the following notations and convention:
\begin{itemize}
    \item $N,n,k,j$ are natural integers, possibly $0$ or $-1$; The convention $\sum_{n=2}^{0,1}=\sum_\varnothing=0$ holds;
    \item $i^2=-1$, and $\gamma=0,577215664\cdots$ is the Euler constant;
    \item $B_j$ denotes the $j$-th Bernoulli number (See Section \ref{bernoulli});
    \item $S(n,k)$ are the Stirling numbers of the second kind ($S(n,k)=0$ if $k>n$, see Section \ref{stirling});
    \item The notation for binomial coefficients is standard.
\end{itemize}
%Let $B_j$ denote the $j$-th Bernoulli number (the convention for $B_1$ is unessential in this section). Let $S(n,k)$ be the Stirling numbers of the second kind. The notation for binomial coefficients is standard. Let $\gamma$ be the Euler constant. Let the convention $\sum_{k=2}^0=\sum_{k=2}^1=0$ hold throughout the paper.

The main result of this paper is the following
\begin{theorem}\label{main}
For all $N\ge0$,
%\bean
%\int_{-\infty}^\infty t^{2N}  \left|\Gamma\cdot\zeta\left(\frac{1}{2}+it\right)\right|^2 dt  & = & 2\pi(-1)^N \left(2C+(2^{2N-1}-1)B_{2N}-4N+ \sum_{j=2}^{2N} T_{2N,j}\frac{\zeta(j)B_{j}}{j}\right),
%    & = & \pi(-1)^N \left(4(C-2N)+(2^{2N}-2)B_{2N}+ 2\sum_{j=2}^N T_{N,j}\frac{\zeta(j)B_{j}}{j}\right),
%\eean
\bean
\frac{(-4)^N}{2\pi}\int_{-\infty}^\infty t^{2N}  \left|\Gamma\zeta\left(\frac{1}{2}+it\right)\right|^2 dt  & = & \log(2\pi)-\gamma -4N + \left(\frac{4^N}{2}-1\right) B_{2N}+ \sum_{j=2}^{2N} T_{2N,j}\frac{\zeta(j)B_{j}}{j},
\eean
where for all $j\ge2$,
\bean
T_{N,j} & = & (j-1)!\sum_{n=2}^{N} \binom{N}{n} 2^{n} \left[(-1)^n S(n+1,j) + (-1)^j S(n,j-1)\right].
\eean
\end{theorem}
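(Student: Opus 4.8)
The plan is to convert the integral over the critical line into a Mellin--Barnes contour integral, feed in Ramanujan's 1915 identity to evaluate the relevant object in closed form, and then read off the even Taylor coefficients --- the last step being the one that produces the Bernoulli and Stirling numbers.

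\emph{Step 1 (reduction).} On the critical line $\Gamma\zeta(\tfrac12+it)=\int_0^\infty g(x)\,x^{-1/2+it}\,dx$ with $g(x)=\frac1{e^x-1}-\frac1x$, so $|\Gamma\zeta(\tfrac12+it)|^2=\Gamma\zeta(s)\Gamma\zeta(1-s)$ for $s=\tfrac12+it$, and $t^{2N}=(-4)^{-N}(2s-1)^{2N}$. Hence
\[
M_N:=\frac{(-4)^N}{2\pi}\int_{-\infty}^\infty t^{2N}\Big|\Gamma\zeta\big(\tfrac12+it\big)\Big|^2dt=\frac1{2\pi i}\int_{(1/2)}(2s-1)^{2N}\,\Gamma\zeta(s)\,\Gamma\zeta(1-s)\,ds.
\]
By Mellin--Plancherel and multiplicativity of the Mellin transform, $\sum_{N\ge0}\frac{M_N}{(2N)!}v^{2N}=F(v)=e^vR(e^{2v})$, where $R(x)=\int_0^\infty g(xw)g(w)\,dw$ has Mellin transform $\Gamma\zeta(s)\Gamma\zeta(1-s)$ and obeys $R(1/x)=xR(x)$ (which forces $F$ even). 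Thus the theorem is exactly the computation of the even Taylor coefficients of $F$ at $0$, equivalently of $R(x)$ near $x=1$.

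\emph{Step 2 (Ramanujan's identity; elementary part).} Here I would use Ramanujan's 1915 identity, which provides a closed form for $R(x)$: an \emph{elementary} part --- a rational function of $x$ times $\log x$, together with the constants $\log(2\pi)$ and $\gamma$, reflecting the pole $s=1$ of $\zeta$ and the mirror point $s=0$ --- plus a \emph{digamma-type remainder} that encodes the trivial zeros of $\zeta$. Taylor-expanding $F(v)=e^vR(e^{2v})$ at $v=0$, the elementary part yields $(\log 2\pi-\gamma)\cosh v$, a term $-2v\sinh v$, and a term $\frac v2\coth v-\frac v2\coth\frac v2=-\frac v{2\sinh v}$; taking the $v^{2N}$-coefficient times $(2N)!$ gives precisely $\log(2\pi)-\gamma$, $-4N$, and $\big(\tfrac{4^N}2-1\big)B_{2N}$ (the last because $\sum_N\frac{B_{2N}}{(2N)!}w^{2N}=\frac w2\coth\frac w2$, so rescaling $w\mapsto 2v$ produces the combination $\tfrac{4^N}2-1$). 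Equivalently, one may shift the contour in the Mellin--Barnes integral past $s=1$: the double pole there (pole of $\zeta(s)$ against $\tfrac{\pi}{\sin\pi s}$), whose Laurent data one reads off from $\Gamma(1+\ve)\zeta(1+\ve)=\ve^{-1}+O(\ve)$, $\Gamma(-\ve)\zeta(-\ve)=\tfrac1{2\ve}+\tfrac12(\gamma-\log 2\pi)+O(\ve)$ and $(2s-1)^{2N}=\sum_n\binom{2N}{n}(2\ve)^n$, gives these same terms.

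\emph{Step 3 (remainder, combinatorics, and the obstacle).} The digamma-type remainder is what produces the numbers $\tfrac{\zeta(j)B_j}{j}$ --- in the residue picture, the poles at $s=2,4,\dots,2N$, the odd integers contributing nothing since $\zeta$ vanishes at the negative even integers. To isolate the coefficient $T_{2N,j}$ I would expand $(2s-1)^{2N}=\sum_n\binom{2N}{n}2^n(s-1)^n(\cdots)$ and push each monomial $(s-1)^n$ through the local expansion of the digamma factor, converting its derivatives into $\zeta$-values via $\psi^{(k)}(1)=(-1)^{k-1}k!\,\zeta(k+1)$ and invoking the operator identity $\big(x\tfrac{d}{dx}\big)^n=\sum_k S(n,k)\,x^k\tfrac{d^k}{dx^k}$, which is exactly what injects the Stirling numbers; matching the coefficient of $\tfrac{\zeta(j)B_j}{j}$ term by term should collapse to $T_{N,j}=(j-1)!\sum_{n\ge2}\binom Nn 2^n\big[(-1)^nS(n+1,j)+(-1)^jS(n,j-1)\big]$. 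The main obstacle is two-fold. First, one cannot simply sum all residues: the series $\sum_k(4k-1)^{2N}\tfrac{\zeta(2k)B_{2k}}{2k}$ diverges, and it is precisely Ramanujan's \emph{exact} identity --- not a mere asymptotic expansion --- that legitimizes extracting a \emph{finite} closed form, the polynomial factor $(2s-1)^{2N}$ being what truncates the sum at $j=2N$. Second, and more delicate, is the bookkeeping: reconciling the several sources of $B_{2N}$, and proving that the Stirling double sums really do collapse to the stated $T_{2N,j}$, rests on standard but nontrivial Stirling/Bernoulli identities and on keeping precise track of the half-integer shifts coming from $(2s-1)^{2N}$ versus $(s-\tfrac12)^{2N}$.
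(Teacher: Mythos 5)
Your Steps 1 and 2 reproduce, correctly, the skeleton of the paper's argument: Mellin--Plancherel turns the moments into the Taylor coefficients at $0$ of $G(v)=e^{v}A(e^{2v})$ (the paper's Lemma \ref{moment-ram} plus the change of variables giving $G(v)=e^vA(e^{2v})$), and your three elementary generating functions $(\log 2\pi-\gamma)\cosh v$, $-2v\sinh v$ and $-\tfrac{v}{2\sinh v}$ are precisely the exponential generating functions of the paper's $\mathcal{L}\circ\mathcal{E}(c)$, $-\tfrac12\mathcal{L}\circ\mathcal{E}(\eta)$ and $-\tfrac12\mathcal{L}\circ\mathcal{E}(\iota)$, so the terms $\log(2\pi)-\gamma$, $-4N$ and $\bigl(\tfrac{4^N}{2}-1\bigr)B_{2N}$ are genuinely accounted for.

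The gap is in Step 3, which is the heart of the matter. First, the closed form you invoke for $R(x)=A(x)$ is misattributed: Ramanujan's 1915 identity is nothing more than the Plancherel step you already used in Step 1 (it identifies a Fourier cosine transform of a weighted $\Xi^2$ with $G$); it does not supply an ``elementary part plus digamma-type remainder'' for $A$ near $x=1$. What actually supplies that expansion --- the paper's Theorem \ref{second}, on which Theorem \ref{main} heavily relies --- is either Bettin--Conrey's Lemma~1 giving the Taylor coefficients $\psi_k$ of their period function $\psi(1+z)$, combined with the identity $A=\tfrac{i\pi}{4}\psi+R$ coming from the exponentially-averaged Vasyunin formula of \cite{DH21a}, or else a delicate elementary computation that regularizes the divergent integrals $\int_\epsilon^\infty x^{k-1}h^{(k)}(x)\,dx$ and $\int_\epsilon^\infty x^kh^{(k)}(x)/(e^x-1)\,dx$ and matches the $\log\epsilon$ and $1/\epsilon$ singularities. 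You correctly observe that the naive residue sum over $s=2,4,\dots$ diverges, but you do not exhibit the mechanism that replaces it by an exact finite sum; saying that ``Ramanujan's exact identity legitimizes'' the truncation is not an argument, since that identity contains no such information and the polynomial factor $(2s-1)^{2N}$ does not by itself truncate anything. Second, the collapse of the resulting double Stirling sum to $T_{2N,j}$ is asserted (``should collapse to'') rather than proved; in the paper this step requires evaluating $\sum_{k}S(n,k)(-1)^kk!\binom{k}{j-1}$ via the generating function $x^{j-1}/(1-x)^{j}$ and the recurrence $jS(n,j)+S(n,j-1)=S(n+1,j)$. Both missing pieces are substantive: as written, the proposal establishes the three easy terms but not the sum $\sum_{j}T_{2N,j}\,\zeta(j)B_j/j$.
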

Let us make four comments. 

(i) Notice that the $T_{2N,j}$'s are integers and the numbers $\zeta(j)B_j/j$ only involve $\pi$, $j$ and $B_{2j}$, since $2(2j)!\ \zeta(2j)=(-1)^{j+1}B_{2j}(2\pi)^{2j}$ and $B_{2j+1}=0$, $j\ge1$. We keep this elegant notation in reference to the connection with the fundamental works of Bettin and Conrey \cite{BC13a, BC13b} (see below for more details). \\

(ii) The quantities under study
\bean
M^{\Gamma\zeta}_{k} & = & \int_{-\infty}^\infty t^{k}  \left|\Gamma\zeta\left(\frac{1}{2}+it\right)\right|^2 dt, \quad k\ge0,
\eean
are moments related to a measure with a density where $|\zeta|^2$ appears. These moments are not to be confused with the moments of the Riemann zeta function 
\bean
\int_0^T\left|\zeta\left(\frac{1}{2}+it\right)\right|^{2k}dt, \quad T>0, \quad k\ge1,
\eean
which %are usually studied in the literature. This latter topic of huge importance 
constitute a very important topic in analytic number theory, and has been the subject of huge research works; See recently e.g. \cite{CK19, HRS19, Naj20} and the fundamental references and connections therein.\\

(iii) The sequence $M^{\Gamma\zeta}$ entirely characterizes $|\zeta|$ on the critical line. Indeed, on one hand, $|\Gamma(s)|^2$ is entirely known due to Euler's reflection formula (where we set $s=1/2+it$ for real $t$):
\bean
|\Gamma(s)|^2 = \Gamma(s)\Gamma(1-s) =  \frac{\pi}{\sin(\pi s)} = \frac{2\pi}{e^{\pi t}+e^{-\pi t}}.
\eean
We could have written $1/\cosh (\pi t)$ within the measure instead of $\Gamma$,  but we keep this notation in reference to its basic relationship with Mellin-Plancherel isometry (see Lemma \ref{fourier-ram}). It is also known that the measure $dt/\cosh (\pi t)$ satisfies remarkable self-reciprocity properties regarding Fourier transform, see e.g. \cite{DKS08} and \cite[p.125]{God15}.

On the other hand, using $|\Gamma(s)|^2=O(e^{-\pi t})$ and the crude bound $\zeta(s)=O(t)$ as $t\to+\infty$ (see e.g. \cite{Ten22}), one has
\bean
\int_{-\infty}^\infty |t|^{k}  \left|\Gamma\zeta\left(s\right)\right|^2 dt
    = O\left(\int_{0}^\infty t^k e^{-t}dt\right) = O(k!)\ ,
\eean
so the Hamburger moment problem  $(-\infty,\infty)$ is determinate (\cite[Prop.1.5 p.88]{Sim98}), i.e. $|\Gamma\zeta(s)|^2dt$ is the only measure verifying Theorem \ref{main}. Notice that $M^{\Gamma\zeta}_{2k+1}=0$ since $t\mapsto |\Gamma\zeta(s)|$ is even.\\

(iv) The right hand side of the main formula is an alternate quantity, which is the $2N$-th derivative at $0$ of the following function $G$ unveiled by Ramanujan in 1915:
\begin{theorem}[\cite{Ram15}, Eq. (22) p .97]\label{rama}
For all real $v$,
\bea \label{ram}
\int_0^\infty \left|\Gamma\left(-\frac{1}{4}+i\frac{t}{4}\right)\right|^2 \Xi\left(\frac{t}{2}\right)^2 \frac{\cos(vt)}{1+t^2}dt
    & = & \pi\sqrt{\pi}\ G(v),
\eea
where $\Xi(t)=\xi(\frac{1}{2}+it)=\frac{1}{2}s(s-1) \pi^{-s/2}\Gamma(s/2)\zeta(s) $, and
\bea
G(v) & = & \int_0^\infty\left(\frac{1}{e^{xe^v}-1}-\frac{1}{xe^v}\right)\left(\frac{1}{e^{xe^{-v}}-1}-\frac{1}{xe^{-v}}\right)dx.
\eea
\end{theorem}
The left hand side of our main formula is basically the $2N$-th derivative at $0$ of the left hand side of (\ref{ram}) (See Section \ref{section-rama}).

The function $G$ is related by a simple change of variable to the "exponential auto--correlation" function $A$ introduced in \cite{DH21a}:
\bean
A(w) & = & \int_0^\infty \left(\frac{1}{xw}-\frac{1}{e^{xw}-1}\right)\left(\frac{1}{x}-\frac{1}{e^{x}-1}\right)dx, \qquad w>0.
\eean

\subsection{Secondary result}
We can now state our secondary result on which Theorem \ref{main} heavily relies. As usual, $\delta_{k,j}$ denotes the Kronecker symbol, and let $H_k$ be the harmonic number:
\bean
H_k & = & \sum_{j=1}^k \frac{1}{j}, \qquad k\ge 1,
\eean
with the convention $H_{-1}=H_0=0$. Set
%and $H=(H_n)_{n\ge0}$, 
%Set \footnote{We call this constant God French Cell number for obvious reasons.}
\bea
C=\frac{\log(2\pi)-\gamma}{2}=0.6303307\cdots
\eea 
\begin{theorem}\label{second}
For all $k\ge0$, 
\bean
A^{(k)}(1)
    % & = & (-1)^k k! \left(C +\frac{1}{2k} - \frac{H_{k+1}}{2}  + \sum_{j=2}^{k} {k \choose j-1} \frac{B_{j}\zeta(j)}{j} \right) \\
     & = & (-1)^k k! \left( (1+\delta_{k,0})C -\frac{1}{2(k+1)} - \frac{H_{k-1}}{2}  + \sum_{j=2}^{k} {k \choose j-1} \frac{\zeta(j)B_j}{j} \right).
\eean
\end{theorem}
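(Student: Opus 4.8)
The plan is to reduce the statement to a definite integral. Since $g(x):=\frac1x-\frac1{e^x-1}$ is smooth on $[0,\infty)$ with all derivatives bounded near $0$ and $g^{(m)}(x)=O(x^{-m-1})$ as $x\to\infty$, one may differentiate $A(v)=\int_0^\infty g(xv)g(x)\,dx$ under the integral sign to get
\[
A^{(k)}(1)=\int_0^\infty x^k\,g^{(k)}(x)\,g(x)\,dx,\qquad k\ge 1,
\]
the case $k=0$ being treated apart ($A(1)=\int_0^\infty g(x)^2\,dx=2C-\frac12$, a classical evaluation, also the $N=0$ instance of Theorem~\ref{main} via the Mellin--Plancherel identity of Lemma~\ref{moment-ram}). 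From here two routes are available. The conceptual one uses that $A$ is, by \cite{DH21a}, expressible in closed form through classical special functions (digamma / $\log\Gamma$, equivalently Ramanujan's $1915$ function), while the Bettin--Conrey power series identity of \cite{BC13b} supplies precisely the Taylor part carrying the numbers $\zeta(j)B_j/j$; re-expanding $A(1+w)$ and matching coefficients, with the help of $\sum_{k\ge j}\binom{k}{j-1}w^k=w^{j-1}\bigl((1-w)^{-j}-1\bigr)$, $\sum_{k\ge0}\frac{w^k}{k+1}=-\frac{\log(1-w)}{w}$ and $\sum_{k\ge0}H_kw^k=-\frac{\log(1-w)}{1-w}$, yields the formula; there the only real work is to separate, inside the Bettin--Conrey closed form, the elementary part (producing $C$, $-\frac1{2(k+1)}$ and $-\frac{H_{k-1}}{2}$) from the $\zeta(j)B_j/j$ part.

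\textbf{Self-contained route.} I would instead evaluate the integral by hand, the single non-elementary ingredient being Binet's formula, which gives the Laplace transform $\int_0^\infty g(t)e^{-st}\,dt=\psi(s)-\log s+\frac1s=:\beta(s)$ for $s>0$ (so $\int_0^\infty t^i g(t)e^{-st}\,dt=(-1)^i\beta^{(i)}(s)$). Integrating by parts $k$ times (all boundary terms vanish because $(x^ke^{-nx})^{(j)}=0$ at $x=0$ for $j<k$) and using the Rodrigues identity $(x^ke^{-nx})^{(k)}=k!\,L_k(nx)e^{-nx}$, $L_k$ the $k$-th Laguerre polynomial, one obtains for each $n\ge1$
\[
\int_0^\infty x^k\,g^{(k)}(x)\,e^{-nx}\,dx=(-1)^k k!\sum_{i=0}^{k}\binom{k}{i}\frac{n^i}{i!}\,\beta^{(i)}(n).
\]
Substituting $g(x)=\frac1x-\sum_{n\ge1}e^{-nx}$ then splits $A^{(k)}(1)$ into $\int_0^\infty x^{k-1}g^{(k)}(x)\,dx$, which equals $\frac{(-1)^k(k-1)!}{2}$ after $k-1$ further integrations by parts, minus $\sum_{n\ge1}\int_0^\infty x^k g^{(k)}(x)e^{-nx}\,dx$.

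\textbf{Heart of the matter.} It remains to evaluate $\sum_{n\ge1}\sum_{i=0}^k\binom ki\frac{n^i}{i!}\beta^{(i)}(n)$. Each individual sum $\sum_n n^i\beta^{(i)}(n)$ diverges (like $\sum_n\frac{(-1)^i i!}{2n}$), so the order of summation must not be exchanged; but $\sum_{i=0}^k\binom ki(-1)^i=0$ for $k\ge1$, so the divergent tails cancel and the surviving finite quantity is $\sum_{i=0}^k\binom ki\frac{c_i}{i!}$, where $c_i$ is the constant term in $\sum_{n\le N}n^i\beta^{(i)}(n)=\frac{(-1)^i i!}{2}\log N+c_i+o(1)$. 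To compute $c_i$ I would expand the polygamma $\psi^{(i)}(n)$ as a tail of $\zeta$-values, interchange the order of summation, and invoke Faulhaber's formula $\sum_{n\le\ell}n^i=\frac{\ell^{i+1}}{i+1}+\frac{\ell^i}{2}+\frac1{i+1}\sum_{p=2}^i\binom{i+1}{p}B_p\ell^{i+1-p}$ with $\sum_\ell\ell^{-p}=\zeta(p)$, while $\sum_{n\le N}\log n=\log N!$ (Stirling, which introduces $\log2\pi$) and $\sum_{n\le N}\frac1n=H_N$ (which introduces $\gamma$); this makes $c_i$ an explicit combination of $\gamma$, $\log2\pi$ and the $\zeta(p)B_p$ with $2\le p\le i$. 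Finally, summing $\binom ki\frac{c_i}{i!}$ over $i$ and using $\frac1{i+1}\binom{i+1}{p}=\frac1p\binom i{p-1}$, $\binom ki\binom i{p-1}=\binom k{p-1}\binom{k-p+1}{i-p+1}$, $\sum_{i=0}^N\binom Ni(-1)^i=0$ ($N\ge1$), $\sum_{i=1}^k\binom ki\frac{(-1)^i}{i}=-H_k$ and $\sum_{i=0}^k\binom ki\frac{(-1)^i}{i+1}=\frac1{k+1}$ collapses the double sum: the $\zeta$-part becomes exactly $\sum_{j=2}^k\binom k{j-1}\frac{\zeta(j)B_j}{j}$, the $\gamma,\log2\pi$ assemble into $C=\frac{\log2\pi-\gamma}{2}$, and the rational leftovers combine with $\frac{(k-1)!}{2}$ (via $H_k=H_{k-1}+\frac1k$) into $-\frac1{2(k+1)}-\frac{H_{k-1}}{2}$, which proves the identity. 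The main obstacle is precisely this handling of the conditionally convergent / divergent intermediate sums: the cancellation $\sum_i\binom ki(-1)^i=0$ has to be applied \emph{before} splitting the $n$-sum, and the expansions of $\sum_{n\le N}n^i\beta^{(i)}(n)$ must be carried to the order at which both the $O(N)$ and the $O(1)$ terms are pinned down.
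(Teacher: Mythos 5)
Your proposal is sound and, on the part you work out in detail, correct; but it is a genuinely different elementary route from the paper's. The paper's self-contained proof regularizes by cutting the integral at $\epsilon$ and splitting $A^{(k)}(1)$ into three individually divergent pieces $D_{1,k}^{\epsilon},D_{2,k}^{\epsilon},D_{3,k}^{\epsilon}$; its algebraic engine is the expansion $h^{(k)}(x)=\sum_{p}(-1)^p p!\,S(k,p)\,e^{px}/(e^x-1)^{p+1}$ together with the change-of-basis identities $S_2S_1=\mathrm{Id}$ and $\sum_p S(k,p)s(p,j)/p=K_{k,j}$, which is where the Bernoulli numbers enter. You instead keep the regular function $g$ intact (so every integral converges and no cutoff is needed), replace the Stirling machinery by the Laplace transform $\beta(s)=\psi(s)-\log s+1/s$ and the Rodrigues/Laguerre identity, and push the divergence into the Dirichlet variable, regularizing by partial sums over $n\le N$; Bernoulli numbers then arrive via Faulhaber, i.e. via the same ultimate source as the paper's $K_{k,j}$ identity, and $\log 2\pi$ via Stirling's formula rather than via Balazard's evaluation of $C$. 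What your route buys is analytic cleanliness (no $\epsilon$-asymptotics of incomplete integrals) and an explicit link to the digamma expansion; what the paper's buys is that its Stirling-number bookkeeping directly yields the coefficients $T_{N,j}$ needed for Theorem \ref{main}. Your reduction is verifiably correct: the boundary terms do vanish, $\int_0^\infty x^{k-1}g^{(k)}(x)\,dx=(-1)^k(k-1)!/2$ is right, and for $k=1$ your double sum evaluates to $\tfrac34-C$, giving $A'(1)=-C+\tfrac14$ as required.

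Two cautions on the part you only sketch. In the interchange $\sum_{n\le N}n^i\sum_{\ell\ge n}\ell^{-i-1}=\sum_{\ell}\ell^{-i-1}\sum_{n\le\min(\ell,N)}n^i$, the tail $\ell>N$ contributes not only the $O(N)$ term needed for cancellation against $\sum_{n\le N}(-1)^i(i-1)!$ but also a nonzero constant (for $i=1$ it contributes $\tfrac N2+\tfrac14$), so ``Faulhaber plus $\sum_\ell \ell^{-p}=\zeta(p)$'' understates the bookkeeping: $\sum_{\ell>N}\ell^{-i-1}$ must be expanded to second order. Likewise $i=0$ needs separate treatment ($\beta(n)=H_n-\gamma-\log n$) and the $\gamma$'s from the various $H_N$'s must be tracked so that only the combination $C$ survives. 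Neither point is a gap in the method --- your closing remark shows you see where the danger lies --- but they are exactly where this computation can silently go wrong.
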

The last sum term of this equality is exactly the same term as in Theorem 1 -- Lemma 1 in \cite{BC13b}, since $A(x)$ is related to their "period" function $\psi$ up to some $1/x$ and $\log(x)$ factors, as shown in \cite{DH21a}.

These last facts then allow for a short proof of Theorem \ref{second}, see Section \ref{bettin-conrey}. We also provide a self-contained and elementary proof, whose various techniques might be of independent interest, showing e.g. how Bettin and Conrey's numbers arise from a combinatorial and real analysis setting.

\subsection{Previous works and motivation}

The so-called second moment of zeta $\int_0^T|\zeta(s)|^2dt$ appears in a variety of contexts and is well understood since Hardy and Littlewood in 1916, see \cite[Chap. VII]{Tit86}. Integrating on $(0,\infty)$ requires a weight, and we encounter the denomination "weighted moment" in the literature. For instance, asymptotic expansion of $\int_0^\infty|\zeta(s)|^2 e^{-\delta t}dt$, $\delta>0$, can be obtained, see e.g. \cite[Theorem 7.15 p.164]{Tit86}, and \cite{BC13a} for new remarkable formulas with convergent asymptotic series (See also \cite{BC13c}). Second moments are also used and studied for other Dirichlet series, see e.g. \cite{BISS18, ABBRS19} for various applications combining interesting tools.
%for a formula allowing fast computations with any desired accuracy. 
Higher weighted moments of zeta, especially the fourth one, are also studied, see e.g. \cite[Chap. VII]{Tit86}, \cite{IM06}, \cite{BC13a}, and references therein.

The motivation for studying $M^{\Gamma\zeta}$ stems from some generalization of the Nyman-Beurling criterion (NB) for the Riemann hypothesis (RH). NB is an approximation problem of the indicator function of $(0,1)$ in $L^2(0,\infty)$ by linear combination of functions $t\mapsto\{\theta_k/t\}$ where $\theta_k\in(0,1]$, and $\{\cdot\}$ is the fractional part function.
B\'aez-Duarte \cite{BD03} showed one can take $\theta_k = 1/k$ in NB. We refer to \cite{dR07,DFMR13} where the authors consider generalizations of the zeta function.

RH then reads as a geometric problem studying a square distance in the Hilbert space $L^2(0,\infty)$; One then has to consider the scalar products
\bea
G_{k,j} & = & \int_0^{\infty} \left\{ \frac{1}{kt}\right\}\left\{ \frac{1}{jt}\right\} dt,\qquad 1\le k,j\le n,
\eea
where $n$ is the size of the corresponding Gram matrix. See \cite{BDBLS05} for a fine study of the corresponding auto-correlation function.
Vasyunin \cite{Vas95} proved that $G_{k,j}$ is a finite cotangent sum, which is connected to a variety of important objects and topics: Estermann function, reciprocity formulas, modular forms, Lewis--Zagier theory, see \cite{BCH85, LZ01, BC13a, MR16, ABB17}.

%Over recent years, some research works have been devoted to the Nyman-Beurling criterion for the Riemann Hypothesis. For example, the articles \cite{BC13} by Bettin and Conrey, or \cite{MR15} by Maier and Rassias aim to study cotangent sums related to this problem and present a series of relevant results.

%For example, in 2013 Bettin and Conrey presented a category of cotangent sums related to this problem that has attracted some interest, with Bettin, Conrey, Maier, Rassias and others 

%A natural question is the following: to what extent is it possible to relax these intricate structures?

Replacing the $\theta_k$ in NB by random variables (r.v.)  $X_{k,n}$, $1\le k\le n$, produced new characterizations and structures, for instance using functions $t\mapsto \pE\{X_{k,n}/t\}$ ($\pE Z$ is the expectation of a r.v. $Z$). See \cite{DH21b} and an important generalization in \cite{ADH22}. Two main frameworks arise:
\begin{itemize}
    \item[(d)] {\em Dilation} --- Take e.g. exponential r.v. $X_k=X_1/k\sim \cal E(k)$; The corresponding square distance written with Mellin isometry involves Dirichlet polynomials, as in the original criterion, but a smoothing effect appears: see the auto-correlation function in \cite{DH21a};
    \item[(c)] {\em Concentration} --- Take e.g. Gamma distributed r.v. $X_{k,n}$ concentrated in $n$ around $1/k$; The square distance involves polynomials and the structure of zeta is then contained within the measure $|\Gamma\zeta(s)|^2dt$. The moments directly appear in the Gram matrix 
    %within the generalized framework 
    (see \cite{ADH22}), a block Hankel matrix with a symbol having power-like singularities (see e.g. \cite{Kra07}).
\end{itemize}
Surprisingly, the auto-correlation function $A$ plays a role in both scalar products: through its value at rational numbers $A(k/j)$ in (d), or its derivatives $A^{(k)}(1)$ in (c).

\subsection{Outline}

In Section \ref{reminder}, we first gather useful information and properties on Bernoulli and Stirling numbers that we will use throughout the paper. Second, we relate the polynomial moments with the derivatives of the remarkable function unveiled by Ramanujan in 1915. Third, we set the tools to differentiate this function.

In Section \ref{sec:proofmain}, we prove Theorem \ref{main}, especially including the short proof of Theorem \ref{second} based on \cite{BC13b} and \cite{DH21a}.
Section \ref{sec:elementary} is devoted to the elementary proof of Theorem \ref{second}, based on several tools developed in Section \ref{reminder}. Finally Section~\ref{sec:numerical} exposes numerical results.

\medskip

\section{Reminders and Preliminary results}\label{reminder}

\subsection{Bernoulli numbers}\label{bernoulli}

The sequence of Bernoulli numbers $(B_n)_{n \ge 0}$ can be defined by its exponential generating function: 
\bea
\frac{t}{e^t-1} & = & \sum_{n=0}^\infty B_n \frac{t^n}{n!}, \qquad |t|<2\pi.
\eea
The generating function for the Bernoulli polynomials $B_n(\cdot)$ reads
\bea
\frac{te^{tx}}{e^t-1} & = & \sum_{n=0}^\infty B_n(x) \frac{t^n}{n!}, \qquad |t|<2\pi.
\eea
The $N$--th Bernoulli polynomial reads
\bean
B_N(x) & = & \sum_{n=0}^N \binom{N}{n} B_n x^{N-n}.
\eean
%and we will also need the expression:
The first Bernoulli numbers are
\bean
B_0 = 1, \  \ B_1 = -\frac{1}{2}, \ \ B_2 = \frac{1}{6}, \ \ B_3 = 0, \  \ B_4 = -\frac{1}{30}, \ \ B_5=0, \ \ B_6=\frac{1}{42}, \ \cdots
\eean 
%Notice that with this definition one has $B_1 = -1/2$ and not $1/2$, which is another definition.

For $0\le j \le k$, we set:
\bea
K_{k,j} & = & \frac{1}{k} \binom{k}{j} B_{k-j} +\delta_{j,k-1}.
\eea
%\bea
%K_{n,k} & = & \frac{1}{k} \binom{n-1}{k-1} B_{n-k}.
%\eea

\subsection{Stirling numbers}\label{stirling}

\subsubsection{Change of basis}
We use the notation $(x)_n$ for the falling factorial polynomial:
\bean
(x)_n = x(x-1)\cdots(x-n+1), \quad n\ge1,
\eean
with $(x)_0=1$.
Both families $(1,x,x^2,\ldots,x^n)$ and $((x)_0,(x)_1,\ldots,(x)_n)$ are bases of the linear space of polynomial of degree at most $n$. The change of basis is ruled by Stirling numbers. 

The Stirling numbers of the first kind (signed), resp. second kind, is the family of coefficients $(s(n,k))_{n \ge 1, 1 \le k \le n}$, resp. $(S(n,k))_{n \ge 1, 0\le k \le n}$,  defined by 
\bean
(x)_n & = & \sum_{k=0}^n s(n,k) x^k, \\  
x^n & = & \sum_{k=0}^n S(n,k) (x)_k.
\eean
If $k \le 0$ or $k > n$ we set $s(n,k)=S(n,k)=0$. Stirling numbers satisfy the recurrence relations:  
\bean
s(n+1,k) & = & s(n,k-1)-n\ s(n,k) \\ 
S(n+1,k) & = & S(n,k-1)+k\ S(n,k).
\eean
The interplay between Stirling numbers of the first and second kind transcripts with matrices. Let $S_1 = (s(n,k))_{0 \le n,k \le N}$ and $S_2 = (S(n,k))_{0 \le n,k \le N}$. As an example, for $N=4$, 
\bean
S_2 = \begin{pmatrix} 1&0&0&0&0 \\ 0&1&0&0&0 \\ 0&1&1&0&0 \\ 0&1&3&1&0 \\ 0&1&7&6&1  \end{pmatrix},\quad 
S_1 = \begin{pmatrix} 1&0&0&0&0 \\ 0&1&0&0&0 \\ 0&-1&1&0&0 \\ 0&2&-3&1&0 \\ 0&-6&11&-6&1 \end{pmatrix}.
\eean
%By interpreting $S_1$ and $S$ as change of basis matrices, 
One then have the fundamental relation $S_2 = S_1^{-1}$. By computing each coefficient of $S_1S_2 = \rm{Id}_N$, one obtains that for any couple of indices $j \le k$: 
\bean
\sum_{p=j}^k S_2(k,p) s(p,j) & = & \delta_{j,k}.
\eean
The following identity, see \cite[(6.100)]{GKP94}, will be used in the elementary proof of Theorem \ref{second}:
%since it will play a similar role as the Kronecker symbol $\delta_{k,j}$:
\bea
\sum_{p=j}^k \frac{S(k,p) s(p,j)}{p} & = & K_{k,j}.
\eea
%For the sake of completeness, it is proven in the Appendix, Lemma \ref{K}.
This formula is obtained writing a finite sum $\sum_k n^k$ using two times the change of basis by means of Stirling numbers, and then by identification using the Faulhaber formula.

\subsubsection{Operator and generating function}
Let $(u_k)_{k \ge 0}$ be a real sequence such that the power series 
\bean
F_u(x) & = & \sum_{p \ge 0} u_p x^p
\eean
has radius of convergence $r>0$.
Denote by $\cal E(u)$ the sequence defined by:
\bean
\cal E(u)_n & = & \sum_{k=0}^n S(n,k) (-1)^k k!\ u_{k},
\eean
and define the sequence $U$ by
\bean
U_n & = & \frac{(-1)^n}{n!} \cal E(u)_n.
\eean
The following lemma uses standard techniques of exponential generating functions, and will be very useful in the sequel.
\begin{lemma} \label{prop:petitesomme}
For all $t\in [0,\log(1+r)[$,
\begin{equation*}
F_U(t) = F_u(1-e^{-t}).
\end{equation*}
\end{lemma}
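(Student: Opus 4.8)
The plan is to prove Lemma \ref{prop:petitesomme} by a direct manipulation of exponential generating functions, exchanging the order of a double summation. First I would write out the definition of $F_U$ and unfold everything:
\begin{align*}
F_U(t) &= \sum_{n\ge 0} U_n t^n = \sum_{n\ge 0} \frac{(-1)^n}{n!}\,\mathcal{E}(u)_n\, t^n = \sum_{n\ge 0} \frac{(-1)^n t^n}{n!}\sum_{k=0}^n S(n,k)(-1)^k k!\,u_k.
\end{align*}
The idea is to swap the sums over $n$ and $k$, collecting for each fixed $k$ the factor $(-1)^k k!\,u_k$ multiplied by $\sum_{n\ge k}\frac{(-1)^n t^n}{n!}S(n,k)$. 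The key ingredient is the classical exponential generating function of the Stirling numbers of the second kind in the upper index, namely
\[
\sum_{n\ge k} S(n,k)\frac{x^n}{n!} = \frac{(e^x-1)^k}{k!},
\]
which I would apply with $x=-t$. This gives $\sum_{n\ge k}\frac{(-1)^n t^n}{n!}S(n,k) = \frac{(e^{-t}-1)^k}{k!}$, and hence
\[
F_U(t) = \sum_{k\ge 0} (-1)^k k!\,u_k\cdot \frac{(e^{-t}-1)^k}{k!} = \sum_{k\ge 0} u_k\,(1-e^{-t})^k = F_u(1-e^{-t}),
\]
since $(-1)^k(e^{-t}-1)^k = (1-e^{-t})^k$.

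It remains to justify convergence and the interchange of summation, which is where the hypothesis $t\in[0,\log(1+r)[$ enters. For such $t$ we have $0\le 1-e^{-t}<\frac{r}{1+r}<r$, so $1-e^{-t}$ lies strictly inside the disk of convergence of $F_u$, guaranteeing that the final series $\sum_k u_k(1-e^{-t})^k$ converges absolutely. To legitimize the rearrangement I would check absolute summability of the double-indexed family $\big(\frac{t^n}{n!}S(n,k)k!\,|u_k|\big)_{0\le k\le n}$: summing first over $n$ for fixed $k$ (all Stirling numbers $S(n,k)$ being nonnegative) produces $k!\,|u_k|\frac{(e^t-1)^k}{k!} = |u_k|(e^t-1)^k$, and $e^t-1<r$ on the given interval, so the resulting series in $k$ converges. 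By Fubini/Tonelli for sums the interchange is valid.

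The computation itself is routine; the only genuine point requiring care is the convergence bookkeeping, i.e. verifying that both $1-e^{-t}$ and $e^t-1$ stay below $r$ precisely when $t<\log(1+r)$ — the first is what makes the answer well-defined, the second is what makes the manipulation rigorous. I would state the Stirling EGF identity with a reference (e.g. \cite[(7.49)]{GKP94}) rather than reprove it. No step presents a real obstacle beyond this; the lemma is essentially a clean application of the substitution $x\mapsto 1-e^{-x}$ at the level of generating functions, which is exactly the operation the operator $\mathcal{E}$ was designed to encode.
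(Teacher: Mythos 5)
Your proof is correct and follows essentially the same route as the paper's: unfold $F_U$, interchange the two sums, and apply the exponential generating function $\sum_{n\ge k}S(n,k)x^n/n!=(e^x-1)^k/k!$ at $x=-t$, with the interchange justified by Tonelli using the nonnegativity of $S(n,k)$ and the bound $e^t-1<r$. The convergence bookkeeping you describe is exactly the paper's, so there is nothing to add.
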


\begin{proof}
Let $t\in [0,\log(1+r)[$, i.e. $0\le e^t-1<r$ and $0\le 1-e^{-t}<r$. We have, since $S(n,k)\ge0$,
\bean
\sum_{n \ge 0}\sum_{k=0}^n \frac{1}{n!}S(n,k) k!\ |u_{k}| t^n= \sum_{k \ge 0} |u_k| \left(k! \sum_{n \ge k} S(n,k) \frac{t^n}{n!} \right)=\sum_{k=0}^\infty |u_k| (e^{t}-1)^k <\infty.
\eean
Therefore, by Fubini Theorem,
\bean
\sum_{n \ge 0} U_n t^n &=& \sum_{k \ge 0} (-1)^k u_k \left(k! \sum_{n \ge k} S(n,k) (-1)^n \frac{t^n}{n!} \right) \\
&=& \sum_{k\ge0} (-1)^k u_k (e^{-t}-1)^k \\
&=& \sum_{k\ge0} u_k (1-e^{-t})^k,
\eean
as desired.
\end{proof}

\subsection{Ramanujan's identity and the polynomial moments}
\label{section-rama}

In \cite{Ram15}, Ramanujan obtained remarkable identities concerning the $\Xi$ and $\xi$ functions, see \cite{D11,Kim16} for many investigations.

Fourier type transform involving the $\Xi$-function (and not its square) is a vast subject, see e.g. \cite[2.6 p.35]{Tit86}, and we refer to \cite{DRRZ15} and references therein for recent applications. 
%Less is known when dealing with $\Xi^2$ of $\zeta^2$.

The identity (\ref{ram}), which is Eq. (22) p.97 in \cite{Ram15}, is today interpreted as a consequence of Mellin-Plancherel isometry (See \cite{Kim16} for generalizations and the full treatment of Ramanujan's identities contained in \cite{Ram15}). 
%The following lemma, which only gathers the arguments in our case of interest, relates the moments $M^{\Gamma\zeta}_{2N}$ to the derivatives of Ramanujan's function $G$:
For the sake of completeness, we recall the Mellin-Plancherel isometry argument to obtain the following lemma, which is only a reformulation of Ramanujan's identity (\ref{ram}) in terms of $\zeta$ and $\Gamma$:

\begin{lemma}[Reformulation of Eq.(22) in \cite{Ram15}] \label{fourier-ram}
For all real $v$,
\bean
\frac{1}{2\pi}\int_{-\infty}^\infty \cos(2vt)  |\Gamma(s)\zeta(s)|^2 dt = G(v) = \int_0^\infty\left(\frac{1}{e^{xe^v}-1}-\frac{1}{xe^v}\right)\left(\frac{1}{e^{xe^{-v}}-1}-\frac{1}{xe^{-v}}\right)dx.
\eean
\end{lemma}

\begin{proof}
Let us recall the fundamental formula
\bea \label{mellin-gamma-zeta}
\int_0^\infty \left(\frac{1}{e^{x}-1}-\frac{1}{x}\right)x^{w-1}dx & = & \Gamma(w)\zeta(w), \quad 0<\mathfrak{R}(w)<1.
\eea
Set for real $v$,
\bea
f_v(x) & = & \frac{1}{e^{xe^v}-1}-\frac{1}{xe^v}, \qquad x>0.
\eea
Therefore, we have in particular for $s=\frac{1}{2}+it$:
\bean
\widehat{f_v}(s) = \int_0^\infty \left(\frac{1}{e^{xe^v}-1}-\frac{1}{xe^v}\right)x^{s-1}dx 
     \ =\  e^{-vs} \widehat{f_0}(s) 
     \ = \ e^{-vs} \Gamma(s)\zeta(s).
\eean
Then, by Mellin-Plancherel isometry:
\bean
G(v)=\int_0^\infty f_v(x)f_{-v}(x)dx & = & \frac{1}{2\pi}\int_{-\infty}^\infty \widehat{f_v}(s)\widehat{f_{-v}}(\b s)dt \\
     & = & \frac{1}{2\pi}\int_{-\infty}^\infty e^{-vs+v\b s}\  |\Gamma(s)\zeta(s)|^2 dt \\
     & = & \frac{1}{2\pi}\int_{-\infty}^\infty \cos(2vt)  |\Gamma(s)\zeta(s)|^2 dt,
\eean
since $-vs+v\b s=-v(\frac{1}{2}+it)+v(\frac{1}{2}-it)=-2ivt$, and $t\mapsto \sin(2vt)|\Gamma(s)\zeta(s)|^2$ is odd.
\end{proof}
We can now differentiate under the integral sign to obtain
\bean
G^{(2N)}(v) & = & \frac{(-1)^N}{2\pi}2^{2N}\int_{-\infty}^\infty \cos(2vt)t^{2N}  |\Gamma(s)\zeta(s)|^2 dt,
\eean
and then taking $v=0$, we get the immediate
\begin{corollary} \label{moment-ram}
For all $N\ge0$,
\bean
\int_{-\infty}^\infty t^{2N}  \left|\Gamma\zeta\left(\frac{1}{2}+it\right)\right|^2 dt  & = & 2\pi(-1)^N 2^{-2N}G^{(2N)}(0).
\eean
\end{corollary}

The change of variable $y=xe^{-v}$ in $G$ directly yields:
\begin{lemma}\label{AG}
For all real $v$,
\bea
G(v) & = & e^v A(e^{2v}).
\eea
\end{lemma}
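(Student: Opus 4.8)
The plan is to perform the substitution $y = xe^{-v}$ (equivalently $x = ye^{v}$, $dx = e^{v}\,dy$) directly in the defining integral for $G(v)$. Under this change of variable one has $xe^{-v} = y$ and $xe^{v} = ye^{2v}$, so the two factors in the integrand transform as
\begin{align*}
\frac{1}{e^{xe^{v}}-1}-\frac{1}{xe^{v}} &= \frac{1}{e^{ye^{2v}}-1}-\frac{1}{ye^{2v}} = -\left(\frac{1}{ye^{2v}}-\frac{1}{e^{ye^{2v}}-1}\right),\\
\frac{1}{e^{xe^{-v}}-1}-\frac{1}{xe^{-v}} &= \frac{1}{e^{y}-1}-\frac{1}{y} = -\left(\frac{1}{y}-\frac{1}{e^{y}-1}\right).
\end{align*}
Multiplying these, the two minus signs cancel and the Jacobian factor $e^{v}$ comes out of the integral, yielding
\begin{align*}
G(v) = e^{v}\int_0^\infty \left(\frac{1}{ye^{2v}}-\frac{1}{e^{ye^{2v}}-1}\right)\left(\frac{1}{y}-\frac{1}{e^{y}-1}\right)dy = e^{v} A\!\left(e^{2v}\right),
\end{align*}
where the last equality is simply the definition of $A$ evaluated at the point $e^{2v}>0$.

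The only point requiring a word of justification is the legitimacy of the change of variable, i.e.\ absolute convergence of the integrals involved. This follows from the classical expansions $\frac{1}{e^{x}-1}-\frac{1}{x}=-\frac12+O(x)$ as $x\to0^{+}$ and $\frac{1}{e^{x}-1}-\frac{1}{x}=-\frac1x+O(e^{-x})$ as $x\to+\infty$: each factor in the integrand defining $G(v)$ is then bounded near $0$ and is $O(1/x)$ at infinity, so the product is in $L^{1}(0,\infty)$ and the substitution is valid. (This integrability is in any case already implicit in the convergence of the Mellin integral \eqref{mellin-gamma-zeta} used in the proof of Lemma \ref{moment-ram}.)

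There is essentially no obstacle here: the statement is a bookkeeping identity expressing in two ways the same bilinear Mellin-type pairing of $f_v$ with $f_{-v}$, once with the dilation split symmetrically as $(e^{v},e^{-v})$ and once with it concentrated on a single argument as $(e^{2v},1)$. The one thing to watch is the matching of sign conventions between the two integrands — the factors in $G$ are written as $\tfrac{1}{e^{x}-1}-\tfrac1x$ whereas those in $A$ are written as $\tfrac1x-\tfrac{1}{e^{x}-1}$ — which is precisely why the product picks up no net sign.
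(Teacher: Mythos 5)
Your proof is correct and follows exactly the paper's approach: the paper's entire argument is the same substitution $y = xe^{-v}$, stated in one line. Your added justification of the sign cancellation and absolute convergence is fine but not needed beyond what the paper records.
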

The main point of the paper is to differentiate $G$. To do so for the composed function $v\mapsto A(e^{2v})$, we will need the two lemmas of the following subsection.
\medskip

\subsection{Differentiating functions composed with the exponential}
Let us consider the differential operator $\cal D:C^\infty(\R)\to C^\infty(\R)$:
\bean
(\cal D \varphi)(x) & = & x\varphi'(x),
\eean
which is a basic example of a Cauchy-Euler operator.
We set $\cal D^{n+1}=\cal D\circ \cal D^n$ for all $n\ge0$ with $\cal D^{0}={\rm Id}$. The following lemma is the first question of Exercise 13 p. 300 in \cite{GKP94}:
\begin{lemma}\label{Dn}
For all $\varphi\in C^\infty$ and $n\ge0$, we have
\bean
\cal D^{n} \varphi(x) & = & \sum_{k=0}^n S(n,k) x^k \varphi^{(k)}(x).
\eean
\end{lemma}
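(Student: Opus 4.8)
The plan is to prove the identity by a straightforward induction on $n$, the only ingredient being the recurrence $S(n+1,k)=S(n,k-1)+kS(n,k)$ for the Stirling numbers of the second kind recalled in Section~\ref{stirling}. For the base case $n=0$ one has $\mathcal D^0\varphi=\varphi$, and the right-hand side collapses to $S(0,0)\,x^0\varphi^{(0)}(x)=\varphi(x)$ since $S(0,0)=1$.

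For the inductive step I would assume the formula at rank $n$ and apply $\mathcal D$, i.e. multiply the termwise derivative by $x$. Using the Leibniz rule $\frac{d}{dx}\bigl(x^k\varphi^{(k)}(x)\bigr)=kx^{k-1}\varphi^{(k)}(x)+x^k\varphi^{(k+1)}(x)$, this gives
\bean
\mathcal D^{n+1}\varphi(x) = \sum_{k=0}^n S(n,k)\Bigl(k\,x^k\varphi^{(k)}(x)+x^{k+1}\varphi^{(k+1)}(x)\Bigr).
\eean
Reindexing the second sum by $j=k+1$ and collecting the coefficient of $x^k\varphi^{(k)}(x)$ in the resulting expression produces $kS(n,k)+S(n,k-1)$, which is exactly $S(n+1,k)$ by the recurrence. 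One then checks that the extreme terms are consistent: at $k=0$ the coefficient is $0=S(n+1,0)$, and at $k=n+1$ it is $S(n,n)=1=S(n+1,n+1)$. This establishes the formula at rank $n+1$ and closes the induction.

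There is no genuine obstacle here; the computation is routine. The only points requiring a little care are the shift of index in the term coming from $x^k\varphi^{(k+1)}$ and the verification that the boundary contributions ($k=0$ and $k=n+1$) match the values dictated by the Stirling recurrence, after which the identity drops out immediately.
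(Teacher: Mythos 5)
Your proof is correct and follows essentially the same route as the paper's: induction on $n$, applying $\mathcal D$ termwise, and using the recurrence $S(n+1,k)=S(n,k-1)+kS(n,k)$ together with the boundary values of the Stirling numbers to close the induction. No issues.
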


\begin{proof}
We check that $\cal D^0\varphi(x)=S(0,0)\varphi^{(0)}(x)=\varphi(x)$ and $\cal D^1\varphi(x)=x\varphi'(x)=S(1,1)x^1\varphi^{(1)}(x)$. Assume the equality holds for some $n\ge1$. Since $\cal D$ is linear, we have
\begin{eqnarray*}
\cal D^{n+1} \varphi(x) & = & \sum_{k=0}^n S(n,k) x(kx^{k-1} \varphi^{(k)}(x)+x^k \varphi^{(k+1)}(x)) \\
    & = & S(n,1) x \varphi'(x)+ \sum_{k=2}^n [k S(n,k)+ S(n,k-1)]x^k \varphi^{(k)}(x) + S(n,n) x^{n+1} \varphi^{(n+1)}(x) \\
    & = & S(n,1) x \varphi'(x)+ \sum_{k=2}^n S(n+1,k) x^k \varphi^{(k)}(x) + S(n,n) x^{n+1} \varphi^{(n+1)}(x) \\
    & = & \sum_{k=0}^{n+1} S(n+1,k) x^k \varphi^{(k)}(x),
\end{eqnarray*}
where we used $S(n,n)=1=S(n+1,n+1)$, $S(n,0)=0=S(n+1,0)$ (since $n\ge1$) and $S(n,1)=1=S(n+1,1)$.
\end{proof}

\begin{lemma}\label{compo-exp}
Let $\phi(x)=\varphi(e^x)$ for any real number $x$. Then, for all $n\ge 1$,
\bean
\phi^{(n)}(x) & = & (\cal D^n \varphi)(e^x) \\
        & = & \sum_{k=0}^n S(n,k) e^{kx}\varphi^{(k)}(e^x).
\eean
\end{lemma}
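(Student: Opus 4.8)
The plan is to reduce everything to the single identity $(\phi)'(x) = (\mathcal D\varphi)(e^x)$ coming from the chain rule, and then iterate. First I would record the base case: since $\phi(x)=\varphi(e^x)$, the chain rule gives
\begin{equation*}
\phi'(x) = e^x\varphi'(e^x) = (\mathcal D\varphi)(e^x),
\end{equation*}
which is exactly the claim for $n=1$ (before applying Lemma \ref{Dn}).

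Next I would run an induction on $n$. Suppose $\phi^{(n)}(x) = (\mathcal D^n\varphi)(e^x)$ for some $n\ge1$. Set $\psi = \mathcal D^n\varphi$, which is again a $C^\infty$ function on $\mathbb R$ (here one uses that $\mathcal D$ maps $C^\infty(\mathbb R)$ to $C^\infty(\mathbb R)$). Then $\phi^{(n)}(x) = \psi(e^x)$ is of the same form as $\phi$ with $\varphi$ replaced by $\psi$, so the base case applied to $\psi$ gives
\begin{equation*}
\phi^{(n+1)}(x) = \frac{d}{dx}\bigl(\psi(e^x)\bigr) = (\mathcal D\psi)(e^x) = (\mathcal D^{n+1}\varphi)(e^x),
\end{equation*}
completing the induction and establishing the first equality $\phi^{(n)}(x) = (\mathcal D^n\varphi)(e^x)$ for all $n\ge1$.

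Finally, the second displayed equality is obtained by substituting $x \mapsto e^x$ into the closed form of Lemma \ref{Dn}, namely $\mathcal D^n\varphi(y) = \sum_{k=0}^n S(n,k)\, y^k\varphi^{(k)}(y)$ with $y = e^x$, which yields $\phi^{(n)}(x) = \sum_{k=0}^n S(n,k)\, e^{kx}\varphi^{(k)}(e^x)$. I do not expect any real obstacle here: the only points that need a word of care are that $\mathcal D$ preserves smoothness (so the induction step is legitimate) and that the inductive hypothesis is applied in the right "shifted" form $\phi^{(n)} = \psi\circ\exp$; everything else is the chain rule and Lemma \ref{Dn}.
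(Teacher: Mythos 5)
Your proof is correct and follows essentially the same route as the paper: the chain rule for the base case, induction by applying $\mathcal D$ to $\mathcal D^n\varphi$, and then Lemma \ref{Dn} evaluated at $e^x$ for the closed form. Your version merely makes explicit (via the auxiliary $\psi=\mathcal D^n\varphi$) the step the paper writes directly as $\phi^{(n+1)}(x)=e^x(\mathcal D^n\varphi)'(e^x)=(\mathcal D(\mathcal D^n\varphi))(e^x)$.
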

\begin{proof}
We have $\phi'(x)=e^x \varphi'(e^x)=(\cal D \varphi)(e^x)$. By induction, assume that for some $n\ge1$, $\phi^{(n)}(x) = (\cal D^n \varphi)(e^x)$. Then
\bean
\phi^{(n+1)}(x) =  e^x(\cal D^n \varphi)'(e^x)  =  (\cal D (\cal D^n \varphi))(e^x),
\eean
and the conclusion follows.
\end{proof}

\bigskip

\section{Proof of the main result Theorem \ref{main}} \label{sec:proofmain}

In view of Corollary \ref{moment-ram} and Lemma \ref{AG}, we need to compute the derivatives of $A$ (Theorem \ref{second}), and then those of $G$.

\subsection{Computation of the derivatives of $A$} \label{bettin-conrey}
We give a short proof of Theorem \ref{second} based on Bettin  and Conrey's power series identity for the function $\psi$ below.

Following \cite{BC13a,BC13b}, consider, for $\mathfrak{Im}(z)>0$, 
\bean
E_1(z) & = & 1-4\sum_{n\ge1}d(n)e^{2\pi i n z} \\
\psi(z) & = & E_1(z)-1/z\ E_1(-1/z),
\eean
and their analytic continuation to $\C\setminus (-\infty,0]$ ($d(n)$ is the number of divisors of $n$).

For $|z|<1$, Bettin and Conrey \cite[Lemma 1, Part two]{BC13b} prove
\bea
\psi(1+z) & = & \frac{2i}{\pi}\sum_{k\ge0} \psi_k z^k,
\eea
with
\bea
\psi_k & = & \frac{(-1)^k}{k+1} + 2 \sum_{j=1}^{k-1}(-1)^{k-j}{k \choose j} \frac{\zeta(j+1)B_{j+1}}{j+1}, \quad k\ge2,
\eea
and $\psi_0=1$, $\psi_1=-1/2$.

Moreover, comparing the theorems \cite[Theorem 1]{BC13b} and \cite[Theorem 1]{DH21a}, we happily found a connection between $\psi$ and $A$ (See \cite{DH21a} and Lemma \ref{reform-BC} below). This turns out to be an almost direct consequence of \cite[Lemma 1, Part one, p.5717]{BC13b}. It is not really obvious that the inverse Mellin transform of their function $Q(s)=\zeta(s)\zeta(1-s)/\sin(\pi s)$ is $A$, up to a multiplicative constant, while it is straightforward to see that the Mellin transform of $A$ in the critical strip is basically $Q$.
\begin{lemma}[Reformulation of Lemma 1, Part one, in \cite{BC13b}] \label{reform-BC}
For all $x>0$
\bean
    A(x) & = & \frac{i\pi}{4} \psi(x) + r(x),
\eean
where
\bean
r(x) & = & C \left(1+\frac{1}{x}\right) - \frac{1}{2}\left(1-\frac{1}{x}\right) \log(x).
\eean
\end{lemma}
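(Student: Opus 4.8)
The strategy is to deduce the identity by matching two known closed forms: the one for $A$ in \cite[Theorem~1]{DH21a} and the one for $\psi$ in \cite[Theorem~1]{BC13b}. Both of these express their respective function as an \emph{elementary} part — a rational function of $x$, possibly multiplied by $\log x$, together with the constants $\gamma$ and $\log(2\pi)$ — plus one and the same \emph{transcendental} remainder of Estermann/cotangent type. The point that makes the lemma work is that these remainders coincide up to the single normalizing constant $\tfrac{i\pi}{4}$, which is precisely the reciprocal of the factor $\tfrac{2i}{\pi}$ occurring in \cite[Lemma~1]{BC13b}. Hence $A(x)-\tfrac{i\pi}{4}\psi(x)$ is elementary, and it remains only to check that it equals $R(x)$.

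Concretely I would proceed as follows. First, I recall that the formula of \cite[Theorem~1]{DH21a} originates from the Mellin--Plancherel representation
\[
A(x) \;=\; \frac{1}{2\pi i}\int_{(c)} x^{-s}\,\Gamma(s)\zeta(s)\,\Gamma(1-s)\zeta(1-s)\,ds \;=\; \frac{1}{2\pi i}\int_{(c)} x^{-s}\,\frac{\pi}{\sin(\pi s)}\,\zeta(s)\zeta(1-s)\,ds, \qquad 0<c<1,
\]
obtained from the Mellin formula $\int_0^\infty\bigl(\tfrac{1}{e^x-1}-\tfrac1x\bigr)x^{w-1}\,dx=\Gamma(w)\zeta(w)$ exactly as in the proof of Lemma~\ref{moment-ram}; the double poles of the integrand at $s=1$ (pole of $\zeta(s)$ against the pole of $\pi/\sin(\pi s)$) and at $s=0$ (pole of $\zeta(1-s)$ against $\pi/\sin(\pi s)$) are what generate the $\log(2\pi),\gamma,\log x$ and $1/x$ structure, i.e.\ the part that assembles into
\[
R(x)\;=\;\Bigl(C-\tfrac12\log x\Bigr)+\tfrac1{2x}\bigl(\log(2\pi)-\gamma+\log x\bigr),
\]
while the remaining, non-meromorphically-summable piece of the integral is the transcendental remainder. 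Second, I invoke \cite[Theorem~1]{BC13b} to identify that transcendental remainder with $\tfrac{i\pi}{4}\psi(x)$ on the domain where that theorem applies. Third, subtracting gives $A(x)-\tfrac{i\pi}{4}\psi(x)=R(x)$ there. Fourth, I extend to all $x>0$ by analytic continuation: $A$ is analytic on a complex neighbourhood of $(0,\infty)$ directly from its defining integral, $\psi$ is analytic on $\CC\setminus(-\infty,0]$ by hypothesis, $R$ is manifestly analytic on $(0,\infty)$, and $(0,\infty)$ is connected, so agreement on a subinterval propagates.

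The real difficulty is bookkeeping rather than any single estimate: one must pin down that the normalizing constant is $\tfrac{i\pi}{4}$ and not $\tfrac{i\pi}{2}$ or $\tfrac{\pi}{4}$; that the branch of the analytic continuation of $\psi$ used in \cite{BC13b} is the one for which $\psi(x)\in i\RR$ when $x>0$, so that $\tfrac{i\pi}{4}\psi(x)$ is real and can match the real quantity $A(x)$; and that the non-elementary pieces of the two source theorems coincide exactly rather than up to a further elementary term. A convenient consistency check that simultaneously fixes the shape of $R$ is the inversion symmetry $F(1/x)=xF(x)$: it holds for $A$ because $G(v)=e^vA(e^{2v})$ is even in $v$, it holds for $R$ by the direct computation $xR(x)=C(x+1)+\tfrac12(1-x)\log x=R(1/x)$, and it holds for $\psi$ from the functional relation of $E_1$ continued to $x>0$ (equivalently, it is visible already on the power series of \cite[Lemma~1]{BC13b}). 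Imposing it forces the elementary remainder to be of the two‑parameter form $a\bigl(1+\tfrac1x\bigr)+b\bigl(1-\tfrac1x\bigr)\log x$, and the $s=1$ residue then fixes $a=C$, $b=-\tfrac12$.
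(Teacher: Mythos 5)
Your high-level strategy --- deduce the lemma by matching \cite[Theorem~1]{DH21a} against \cite[Theorem~1]{BC13b} --- is exactly the paper's, but the mechanism you describe is not the one that makes the match work, and the step you leave unexecuted is the entire content of the proof. Both cited theorems are statements at \emph{rational} points $x=h/k$ about the cotangent sum $c(h/k)=-\sum_{a=1}^{k-1}\frac{a}{k}\cot(\pi a h/k)$: Bettin--Conrey's reciprocity formula reads $xc(x)+c(1/x)-\frac{1}{\pi k}=\frac{ix}{2}\psi(x)$, while \cite[Theorem~1]{DH21a} evaluates the \emph{same} left-hand side as $\frac{1}{\pi}\left(2xA(x)-2(1+x)C+(x-1)\log x\right)$. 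Equating the two right-hand sides and solving for $A(x)$ yields the lemma at rationals with $R$ falling out of the algebra, and continuity of $A$, $\psi$, $R$ on $(0,\infty)$ together with density of the rationals finishes it. No Mellin contour shift, no residue computation, and no separate determination of the coefficients of $R$ are needed.

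The concrete gaps in your plan are these. First, your step ``I invoke \cite[Theorem~1]{BC13b} to identify that transcendental remainder with $\frac{i\pi}{4}\psi(x)$'' is precisely the assertion to be proven and is given no mechanism: that theorem says nothing about the non-meromorphic part of a Mellin integral of $\Gamma\zeta(s)\Gamma\zeta(1-s)$; the bridge between $A$ and $\psi$ is the cotangent sum common to the two reciprocity formulas, which your sketch never names. Second, the constant is misidentified: $\frac{i\pi}{4}$ is \emph{not} the reciprocal of the factor $\frac{2i}{\pi}$ of \cite[Lemma~1]{BC13b} (that reciprocal is $-\frac{i\pi}{2}$); it arises as $\frac{\pi}{2x}\cdot\frac{ix}{2}$ when one solves the equated reciprocity formulas for $A(x)$ --- and since you yourself flag pinning down this constant as ``the real difficulty,'' getting it wrong in the motivation is not a cosmetic slip. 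Third, re-deriving \cite[Theorem~1]{DH21a} by shifting the contour in $\frac{1}{2\pi i}\int_{(1/2)}x^{-s}\Gamma\zeta(s)\Gamma\zeta(1-s)\,ds$ is a far larger undertaking than the lemma itself and is not carried out. On the positive side, your inversion-symmetry check $xR(x)=R(1/x)$, $xA(x)=A(1/x)$ (from $G$ even) and $x\psi(x)=\psi(1/x)$ is correct and is a genuine sanity check on the shape of $R$, but it only constrains the elementary part up to two parameters and cannot substitute for the identification of the $\psi$-component.
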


We can now differentiate $A$. On one hand,
\bean
r^{(k)}(x) & = & \frac{(-1)^k k!}{x^{k+1}}C-\frac{1}{2}\sum_{j=0}^k{k \choose j}(1-1/x)^{(j)}\log(x)^{(k-j)} \\
    & = & \frac{(-1)^k k!}{x^{k+1}}C + \frac{1}{2}\sum_{j=1}^{k-1}{k \choose j}\frac{(-1)^j j!}{x^{j+1}}\frac{(-1)^{k-j-1} (k-j-1)!}{x^{k-j}} \\
    &   & \qquad \qquad -\frac{1}{2}(1-1/x)\frac{(-1)^{k-1} (k-1)!}{x^{k}} + \frac{1}{2} \frac{(-1)^k k!}{x^{k+1}}\log(x).
\eean
Therefore
\bean
r^{(k)}(1) & = & (-1)^k k!C - \frac{(-1)^k k!}{2}\sum_{j=1}^{k-1}\frac{1}{k-j}\\
    & = & (-1)^k k! \left(C-\frac{H_{k-1}}{2}\right),
\eean
with the convention $H_0=0$.
On the other hand, with Taylor formula,
\bean
\frac{i\pi}{4}\psi^{(k)}(1) & = & \frac{i\pi}{4}\cdot \frac{2i}{\pi} \psi_k \ k!\\
        & = & -\frac{1}{2}\frac{(-1)^k k!}{k+1}- (-1)^k k!\sum_{j=2}^{k}(-1)^{j-1}{k \choose j-1} \frac{\zeta(j)B_{j}}{j},
\eean
which yields to the desired expression in Theorem \ref{second}, noting that $(-1)^jB_j=B_j$ for $j\ge2$.

\bigskip

\subsection{Computation of the derivatives of $G$}
Let $u$ be a real sequence, recall the definition of $\cal E(u)$ and define the sequence $\cal L(u)$:
\bean
\cal E(u)_n & = & \sum_{k=0}^n S(n,k) (-1)^k k!\ u_{k}\\
\cal L(u)_N & = & \sum_{n=0}^N {N\choose n} 2^{n} u_{n}.
\eean
The $\cal E$ is used to suggest that it is related to differentiating a function composed by an exponential. 
We use the notation $\cal L$ to refer to the Leibniz rule to differentiate a product.

\begin{lemma}
For all $N\ge 1$,
\bean
G^{(N)}(0) & = & \left(\cal L\circ \cal E\left(c-\frac{1}{2}(\iota+\eta)+\beta\right)\right)_N,
\eean
where we define for all $k\ge0$,
\bean
c_k & = & C(1+\delta_{k,0}) \\
\iota_k & = & \frac{1}{k+1} \\
\eta_k  & = & H_{k-1} \\
\beta_k & = & \sum_{j=2}^{k} {k \choose j-1} \frac{B_{j}\zeta(j)}{j}.
\eean
\end{lemma}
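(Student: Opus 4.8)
The plan is to differentiate the factorisation $G(v)=e^{v}A(e^{2v})$ (proved just above) exactly $N$ times at $v=0$, and to read off the two operators from the three elementary devices that appear: the Leibniz rule produces the operator $\cal L$, a dilation of the variable produces the weights $2^{n}$, and Lemma \ref{compo-exp} together with Theorem \ref{second} produce the operator $\cal E$ acting on the sequence $c-\tfrac12(\iota+\eta)+\beta$.

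Concretely, I would first write $G=e^{v}\cdot g$ with $g(v)=A(e^{2v})$ and apply the Leibniz rule; since every derivative of $e^{v}$ equals $1$ at $v=0$, this gives $G^{(N)}(0)=\sum_{n=0}^{N}\binom{N}{n}g^{(n)}(0)$. Next, setting $g(v)=h(2v)$ with $h(w)=A(e^{w})$, the chain rule yields $g^{(n)}(0)=2^{n}h^{(n)}(0)$, hence $G^{(N)}(0)=\sum_{n=0}^{N}\binom{N}{n}2^{n}h^{(n)}(0)$. Now Lemma \ref{compo-exp} applied to $\varphi=A$ and evaluated at $x=0$ (so $e^{x}=1$) gives $h^{(n)}(0)=(\cal D^{n}A)(1)=\sum_{k=0}^{n}S(n,k)A^{(k)}(1)$ for $n\ge1$, while $h^{(0)}(0)=A(1)$ needs no lemma. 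Finally I would insert Theorem \ref{second}, which states precisely that $A^{(k)}(1)=(-1)^{k}k!\,a_{k}$ with $a_{k}=c_{k}-\tfrac12(\iota_{k}+\eta_{k})+\beta_{k}$ (a term-by-term comparison with the definitions of $c,\iota,\eta,\beta$ in the statement). Substituting, $h^{(n)}(0)=\sum_{k=0}^{n}S(n,k)(-1)^{k}k!\,a_{k}=\cal E(a)_{n}$, and therefore $G^{(N)}(0)=\sum_{n=0}^{N}\binom{N}{n}2^{n}\cal E(a)_{n}=\cal L(\cal E(a))_{N}=\bigl(\cal L\circ\cal E(a)\bigr)_{N}$, which is the asserted identity.

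I do not expect a genuine obstacle here: once $G(v)=e^{v}A(e^{2v})$, Lemma \ref{compo-exp} and Theorem \ref{second} are in hand, the argument is pure bookkeeping, and it does not even use the linearity of $\cal E$ and $\cal L$ — only the definition of the sequence $a=c-\tfrac12(\iota+\eta)+\beta$. The two points deserving a line of care are: that the dilation by $2$ (because $A$ is evaluated at $e^{2v}$, not $e^{v}$) is exactly what creates the factors $2^{n}$ matching the $2^{n}$ built into the definition of $\cal L$; and the mild case-distinction at $n=0$ when invoking Lemma \ref{compo-exp}, stated for $n\ge1$. As a consistency check, the same computation at $N=0$ gives $G(0)=a_{0}=2C-\tfrac12=\log(2\pi)-\gamma-\tfrac12$, in agreement with Lemma \ref{moment-ram} and Theorem \ref{main}.
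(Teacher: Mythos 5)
Your proof is correct and follows essentially the same route as the paper: Leibniz rule on $e^{v}\cdot A(e^{2v})$, the chain rule for the dilation by $2$ producing the factors $2^{n}$, Lemma \ref{compo-exp} for the composition with the exponential, and finally Theorem \ref{second} to substitute $A^{(k)}(1)=(-1)^{k}k!\,a_{k}$. The paper merely compresses these steps into one displayed formula for $G^{(N)}(x)$ before evaluating at $x=0$, so your write-up is just a more explicit version of the same bookkeeping.
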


\begin{proof}
Recall that 
\bean
G(v) & = & e^v A(e^{2v}).
\eean
Using the Leibniz rule, Lemma \ref{compo-exp} together with the composition $x\mapsto 2x$, we obtain
\bean
G^{(N)}(x) & = & \sum_{n=0}^N {N\choose n} 2^{n}\sum_{k=0}^n S(n,k) e^{2kx}A^{(k)}(e^{2x}),
\eean
and then
\bean
G^{(N)}(0) & = & \sum_{n=0}^N {N\choose n} 2^{n}\sum_{k=0}^n S(n,k) A^{(k)}(1).
\eean
Writing
\bean
A^{(k)}(1) & = & (-1)^k k! \left( c_k-\frac{1}{2}(\iota_k+\eta_k)+\beta_k \right),
\eean
and using the notations, yields the desired expression.
\end{proof}

In view of Corollary \ref{moment-ram} and the
linearity of the operator $\cal L\circ \cal E$, what remains to establish Theorem 1.1 is to
compute each of the four quantities $\cal L\circ \cal E(c)$, $\cal L\circ \cal E(\iota)$, $\cal L\circ \cal E(\eta)$ and $\cal L\circ \cal E(\beta)$. This task is accomplished in the next four lemmas.

\begin{lemma}\label{}
We have 
\bean
\cal L\circ\cal E(c)_N = C\sum_{n=0}^N {N\choose n} 2^{n} \sum_{k=0}^n S(n,k) (-1)^k k!(1+\delta_{k,0})  = (1+(-1)^N)C.
\eean
\end{lemma}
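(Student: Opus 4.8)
The plan is to compute $\cal L\circ\cal E(c)_N$ directly by exploiting that $c_k = C(1+\delta_{k,0})$ splits into two pieces. First I would write $c = C\cdot\mathbf{1} + C\delta$, where $\mathbf{1}$ is the constant sequence $1$ and $\delta$ is the sequence $(1,0,0,\dots)$, and use linearity of both $\cal E$ and $\cal L$. For the $\delta$-piece, $\cal E(\delta)_n = \sum_{k=0}^n S(n,k)(-1)^k k!\,\delta_k = S(n,0) = \delta_{n,0}$, so $\cal L\circ\cal E(\delta)_N = \sum_{n=0}^N \binom{N}{n}2^n \delta_{n,0} = 1$ for all $N\ge 0$.

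For the constant-sequence piece, the key is to identify $\cal E(\mathbf{1})_n = \sum_{k=0}^n S(n,k)(-1)^k k!$ in closed form. This is the alternating sum of Stirling numbers of the second kind weighted by factorials; by the standard identity $\sum_{k=0}^n S(n,k)(-1)^k k! = (-1)^n$ (for instance, this is $\cal E(u)$ applied to $u_k \equiv 1$, whose generating function by Lemma~\ref{prop:petitesomme} with $F_u(x) = 1/(1-x)$, $r=1$, equals $F_u(1-e^{-t}) = e^t$, giving $U_n = (-1)^n/n!\cdot\cal E(\mathbf 1)_n$ and $\sum U_n t^n = e^t$, hence $U_n = 1/n!$, hence $\cal E(\mathbf 1)_n = (-1)^n$). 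Alternatively, one derives it from Lemma~\ref{Dn} applied to $\varphi(x) = 1/x$ evaluated at $x=1$, since $\cal D^n(1/x) = (-1)^n n!/x^{n+1}$. Once $\cal E(\mathbf 1)_n = (-1)^n$ is in hand, $\cal L\circ\cal E(\mathbf 1)_N = \sum_{n=0}^N \binom{N}{n}2^n(-1)^n = (1-2)^N = (-1)^N$ by the binomial theorem.

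Combining the two pieces: $\cal L\circ\cal E(c)_N = C\big((-1)^N + 1\big) = (1+(-1)^N)C$, which is the claimed value. The intermediate display $C\sum_{n=0}^N \binom{N}{n}2^n\sum_{k=0}^n S(n,k)(-1)^k k!(1+\delta_{k,0})$ is simply the expansion of $\cal L\circ\cal E(c)_N$ from the definitions, before simplification, so the first equality is immediate and only the second requires the two evaluations above.

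The only mildly delicate point is justifying $\sum_{k=0}^n S(n,k)(-1)^k k! = (-1)^n$ cleanly; I expect this to be the ``main obstacle'' only in the sense of choosing the slickest route, and the generating-function argument via Lemma~\ref{prop:petitesomme} (or the $\cal D^n(1/x)$ computation via Lemma~\ref{Dn}) dispatches it in one line. Everything else is the binomial theorem and the trivial $\cal E(\delta)_n = \delta_{n,0}$ computation. Note that the formula holds for $N\ge 0$ (at $N=0$ it gives $2C$, consistent with $c_0 = 2C$), which matches the need in the surrounding argument where $G^{(N)}(0)$ for $N\ge 1$ is being assembled; I would state it for $N \ge 0$ or $N \ge 1$ as the paper prefers.
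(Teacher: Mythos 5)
Your proposal is correct and follows essentially the same route as the paper: the same split of $c_k=C(1+\delta_{k,0})$ into a constant piece and a delta piece, the same evaluation $\sum_{n=0}^N\binom{N}{n}2^n S(n,0)\delta_{\cdot}=1$, the key identity $\sum_{k=0}^n S(n,k)(-1)^k k!=(-1)^n$, and the binomial theorem $\sum_{n=0}^N\binom{N}{n}(-2)^n=(-1)^N$. The only cosmetic difference is that the paper gets the key identity in one line by observing $(-1)^k k!=(-1)_k$ and evaluating the change-of-basis relation $x^n=\sum_k S(n,k)(x)_k$ at $x=-1$, whereas you route it through the generating-function lemma; both are valid.
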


\begin{proof}
First, we have due to the definition of $S(n,k)$:
\bean
\sum_{k=0}^n S(n,k)(-1)^k k!  = \sum_{k=0}^n S(n,k) (-1)_k = (-1)^n,
\eean
and
\bean
\sum_{n=0}^N {N\choose n} 2^{n} (-1)^n & = & (-1)^N.
\eean
Second, notice that
\bean
\sum_{n=0}^N {N\choose n} 2^{n} \sum_{k=0}^n S(n,k) (-1)^k k!\ \delta_{k,0} = \sum_{n=0}^N {N\choose n} 2^{n} S(n,0) = {N\choose 0} S(0,0)=1.
\eean
\end{proof}

\begin{lemma}
We have
\bean
\cal E(\iota)_n & = & B_n  \\
\cal L\circ\cal E(\iota)_N & = & (2-2^N) B_N.
\eean
\end{lemma}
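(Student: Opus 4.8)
The plan is to establish the two identities separately, in each case by identifying coefficients of two explicit generating functions, so that the only real work is a couple of elementary manipulations with the functions $t/(e^t-1)$ and $te^t/(e^t-1)$.

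For the first identity $\cal E(\iota)_n=B_n$ I would apply Lemma~\ref{prop:petitesomme} to $u_k=\iota_k=1/(k+1)$. Its ordinary generating function is $F_\iota(x)=\sum_{k\ge0}x^k/(k+1)=-\log(1-x)/x$, so $r=1$, and with $U_n=\frac{(-1)^n}{n!}\cal E(\iota)_n$ the lemma gives, for $t\in[0,\log 2[$,
\[
\sum_{n\ge0}U_n\,t^n=F_\iota(1-e^{-t})=\frac{-\log(e^{-t})}{1-e^{-t}}=\frac{t}{1-e^{-t}}=\frac{te^{t}}{e^{t}-1}.
\]
On the other hand, putting $x=1$ in the generating function of the Bernoulli polynomials recalled in Section~\ref{bernoulli} gives $\frac{te^{t}}{e^{t}-1}=\sum_{n\ge0}B_n(1)\frac{t^n}{n!}$, and the reflection identity $B_n(1-x)=(-1)^nB_n(x)$ (read off from that same generating function) yields $B_n(1)=(-1)^nB_n$. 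Comparing power-series coefficients gives $U_n=(-1)^nB_n/n!$, i.e. $\cal E(\iota)_n=B_n$.

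For the second identity I would switch to exponential generating functions, observing that $\cal L$ turns the EGF $F$ of a sequence into $e^t F(2t)$; since $\cal E(\iota)_n=B_n$ has EGF $t/(e^t-1)$, the EGF of $\cal L\circ\cal E(\iota)$ is $2te^{t}/(e^{2t}-1)$. A short partial-fraction computation in the variable $e^t$, using $e^{2t}-1=(e^t-1)(e^t+1)$ together with $\frac{t}{e^t+1}=\frac{t}{e^t-1}-\frac{2t}{e^{2t}-1}$, rewrites this as $\frac{2t}{e^t-1}-\frac{2t}{e^{2t}-1}$, which is $\sum_{N\ge0}(2-2^{N})B_N\frac{t^N}{N!}$; matching coefficients of $t^N/N!$ gives $\cal L\circ\cal E(\iota)_N=(2-2^{N})B_N$.

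I do not expect a serious obstacle: the only points needing a little care are checking that Lemma~\ref{prop:petitesomme} really supplies a power-series identity valid on a neighbourhood of $0$ (so that coefficient comparison is legitimate), and treating the exceptional index $n=1$ correctly when passing between $B_n(1)$ and $B_n$. If one prefers, the first identity is the classical Stirling-number formula $B_n=\sum_k\frac{(-1)^kk!}{k+1}S(n,k)$ and can simply be cited, while the second then follows directly from $\cal L\circ\cal E(\iota)_N=\sum_n\binom{N}{n}2^nB_n=2^{N}B_N(1/2)$ and the classical value $B_N(1/2)=(2^{1-N}-1)B_N$.
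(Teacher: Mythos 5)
Your proposal is correct, and for the first identity it is essentially the paper's own proof: both apply Lemma~\ref{prop:petitesomme} to $u_k=1/(k+1)$, compute $F_\iota(1-e^{-t})=t/(1-e^{-t})$, and identify coefficients. The only cosmetic difference is that you recognize $te^t/(e^t-1)$ as the generating function of $B_n(1)=(-1)^nB_n$, whereas the paper writes $t/(1-e^{-t})=t/(e^t-1)+t$ and absorbs the extra $t$ into the $n=1$ term via $B_1+1=-B_1$; both handle the exceptional index correctly. For the second identity your primary route differs from the paper's: you observe that $\mathcal{L}$ acts on exponential generating functions by $F(t)\mapsto e^tF(2t)$ and then do a partial-fraction manipulation to get $2te^t/(e^{2t}-1)=2t/(e^t-1)-2t/(e^{2t}-1)$, reading off $(2-2^N)B_N$ directly, while the paper writes $\mathcal{L}\circ\mathcal{E}(\iota)_N=2^NB_N(1/2)$ and quotes the classical value $B_N(1/2)=(2^{1-N}-1)B_N$. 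The two are equivalent in substance (your partial-fraction identity is essentially a proof of that classical value), and indeed your closing remark spells out the paper's version as an alternative; your EGF route has the small advantage of being self-contained, at the cost of one extra algebraic identity to verify.
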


\begin{proof}
First:
\bean
F_\iota(x) = \sum_{k=0}^\infty \frac{x^k}{k+1}
     =  - \frac{\ln(1-x)}{x}, \quad 0<x<1.
\eean
Therefore, noting that $B_1+1=(-1)^1B_1$ and $B_n=(-1)^n B_n$ for $n\ge2$,
\bean
F_\iota(1-e^{-t}) = \frac{t}{1-e^{-t}}
            = \frac{t}{e^{t}-1} +t = \sum_{n=0}^\infty (-1)^n\frac{B_n}{n!} t^n.
\eean
But $F_\iota(1-e^{-t})=F_I(t)$ with $I_n=\frac{(-1)^n}{n!}\cal E(\iota)_n$. Therefore, by identification,
\bean
\cal E(\iota)_n & = & B_n.
\eean
Second:
Recall that the $N^{th}$ Bernoulli polynomial is
\bean
B_N(x) & = & \sum_{n=0}^N \binom{N}{n} B_n x^{N-n},
\eean
and $B_N(0)= B_N$. Thus
\bean
B_N\left(\frac{1}{2}\right) & = & \frac{1}{2^N}\sum_{n=0}^N \binom{N}{n} B_n 2^{n} \\
     & = & \frac{1}{2^N}\cal L\circ\cal E(\iota)_N.
\eean
But it is known that (see e.g. Corollary 9.1.5. p.5-6 in \cite{Coh07})
\bea
B_N\left(\frac{1}{2}\right) & = & \left(\frac{1}{2^{N-1}} - 1 \right) B_N.
\eea
Hence
\bean
\cal L\circ\cal E(\iota)_N & = & (2-2^N) B_N.
\eean
\end{proof}

\begin{lemma}
We have
\bean
\cal E(\eta)_n & = &  (-1)^n n + \delta_{n,1}\\
\cal L\circ\cal E(\eta)_N & = & 2N (1+(-1)^N).
\eean
\end{lemma}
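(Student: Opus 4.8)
The plan is to obtain $\mathcal{E}(\eta)$ directly from Lemma \ref{prop:petitesomme} applied to the sequence $u_p=H_{p-1}$, and then to evaluate $\mathcal{L}$ through a single binomial-derivative identity. First I would identify the generating function $F_\eta$. Since $H_{-1}=H_0=0$, one has $F_\eta(x)=\sum_{p\ge 2}H_{p-1}x^p=x\sum_{m\ge 1}H_m x^m$, and the classical identity $\sum_{m\ge 1}H_m x^m=-\ln(1-x)/(1-x)$ gives $F_\eta(x)=-x\ln(1-x)/(1-x)$, a power series of radius $r=1$. Lemma \ref{prop:petitesomme} then requires computing $F_\eta(1-e^{-t})$ for $t\in[0,\log 2)$; substituting $x=1-e^{-t}$, so that $1-x=e^{-t}$ and $\ln(1-x)=-t$, collapses this expression to $F_\eta(1-e^{-t})=t(e^t-1)$.

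Next I would read off coefficients. With $U_n=\frac{(-1)^n}{n!}\mathcal{E}(\eta)_n$ and $F_U(t)=t(e^t-1)=\sum_{n\ge 1}\frac{t^{n+1}}{n!}$, we get $U_0=U_1=0$ and $U_n=\frac{1}{(n-1)!}$ for $n\ge 2$, whence $\mathcal{E}(\eta)_n=(-1)^n n!\,U_n$ equals $0$ for $n\in\{0,1\}$ and $(-1)^n n$ for $n\ge 2$. Since $(-1)^1\cdot 1+\delta_{1,1}=0$ and $(-1)^0\cdot 0+\delta_{0,1}=0$, this is exactly $\mathcal{E}(\eta)_n=(-1)^n n+\delta_{n,1}$ for all $n\ge 0$; the two small cases can also be verified straight from the definition, as $H_{-1}=H_0=0$ annihilate the relevant terms.

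Finally, for $\mathcal{L}\circ\mathcal{E}(\eta)_N=\sum_{n=0}^N\binom{N}{n}2^n\big((-1)^n n+\delta_{n,1}\big)$ I would split off the Kronecker term, which contributes $\binom{N}{1}2=2N$, and evaluate $\sum_{n=0}^N\binom{N}{n}n(-2)^n$ using $\sum_{n=0}^N\binom{N}{n}n x^n=Nx(1+x)^{N-1}$ at $x=-2$, which gives $-2N(-1)^{N-1}=2N(-1)^N$. Adding the two pieces yields $\mathcal{L}\circ\mathcal{E}(\eta)_N=2N\big(1+(-1)^N\big)$, as claimed.

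The argument is essentially routine; the only points that need a little care are the index shift in the harmonic-number generating function (which is precisely why the $p=0,1$ coefficients vanish, forcing the $\delta_{n,1}$ correction rather than a clean $(-1)^n n$), and checking that the substitution $x=1-e^{-t}$ stays inside the disk of convergence, i.e. $t<\log 2=\log(1+r)$, as required by Lemma \ref{prop:petitesomme}.
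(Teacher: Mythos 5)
Your proposal is correct and follows essentially the same route as the paper: compute $F_\eta(x)=-x\ln(1-x)/(1-x)$, substitute $x=1-e^{-t}$ to get $t(e^t-1)$, identify coefficients via Lemma \ref{prop:petitesomme}, and then evaluate the binomial sum (the paper uses the absorption identity $n\binom{N}{n}=N\binom{N-1}{n-1}$ where you differentiate $(1+x)^N$, a negligible difference). No gaps.
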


\begin{proof}
First, we compute $\cal E(\iota)_n$. Since $H_{-1}=H_0=0$,
\bean
F_\eta(x) = \sum_{k\ge0} H_{k-1} x^k= \sum_{k\ge2} H_{k-1} x^k=\sum_{k\ge1} H_{k} x^{k+1}, \quad 0<x<1.
\eean
Thus, writing $H_k$ and using the Fubini--Tonelli theorem,
\bean
F_\eta(x)  = x\sum_{k=1}^\infty \sum_{1\le p\le k} \frac{x^k}{p} = x\sum_{p\ge1} \frac{1}{p} \sum_{k \ge p} x^k = \frac{x}{1-x} \sum_{p\ge1} \frac{x^p}{p} = - \frac{x\ln(1-x)}{1-x}.
\eean
Then
\bean
F_\eta(1-e^{-t}) = -\frac{(1-e^{-t})(-t)}{e^{-t}} = t(e^t-1)
            = \sum_{n\ge1} \frac{t^{n+1}}{n!} = \sum_{n\ge2} \frac{t^{n}}{(n-1)!}
\eean
But $F_\eta(1-e^{-t})=F_{\cal H}(t)$ with $\cal H_n=\frac{(-1)^n}{n!}\cal E(\eta)_n$. Therefore $\cal H_0=\cal H_1=0$ and 
\bean
\frac{(-1)^n}{n!}\cal E(\eta)_n & = & \frac{1}{(n-1)!}, \quad n\ge2.
\eean
Hence 
\bean
\cal E(\eta)_n & = & (-1)^n n, \quad n\ge2,
\eean
and $\cal E(\eta)_0=\cal E(\eta)_1=0$, which we can write for all $n\ge0$,
\bean
\cal E(\eta)_n & = & (-1)^n n + \delta_{n,1}.
\eean

Second:
\bean
\cal L\circ\cal E(\eta)_N & = & \sum_{n=0}^N \binom{N}{n} 2^{n} \left((-1)^n n + \delta_{n,1}\right) \\
    & = & 2N + \sum_{n=0}^N \binom{N}{n} (-2)^{n} n.
\eean
But 
\bean
\sum_{n=0}^N \binom{N}{n} (-2)^{n} n & = & \sum_{n=1}^N n\binom{N}{n} (-2)^{n} \\
    & = & N\sum_{n=1}^N \binom{N-1}{n-1} (-2)^{n} \\
    & = & N\sum_{n=0}^{N-1} \binom{N-1}{n} (-2)^{n+1} \\
%    & = & -2N\sum_{n=0}^{N-1} \binom{N-1}{n} (-2)^{n} \\
    & = & -2N(-2+1)^{N-1} =2(-1)^N N.
\eean
Finally,
\bean
\cal L\circ\cal E(\eta)_N & = & 2N (1+(-1)^N).
\eean
\end{proof}

\begin{lemma}
We have
\bean
\cal L\circ\cal E(\beta)_N & = & \sum_{j=2}^N T_{N,j}\frac{\zeta(j)B_{j}}{j},
\eean
where 
\bean
T_{N,j} & = & (j-1)!\sum_{n=2}^N \binom{N}{n} 2^{n} \left[(-1)^n S(n+1,j) + (-1)^j S(n,j-1)\right].
\eean
\end{lemma}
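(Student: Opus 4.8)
The plan is to unfold all the definitions, exchange the order of summation so that the coefficient of each $\zeta(j)B_j/j$ becomes an explicit sum of Stirling numbers, and then evaluate that sum in closed form by a generating-function argument. Concretely, by the definitions of $\mathcal{L}$, $\mathcal{E}$ and $\beta$,
\[
\mathcal{L}\circ\mathcal{E}(\beta)_N=\sum_{n=0}^N\binom{N}{n}2^n\sum_{k=0}^nS(n,k)(-1)^kk!\sum_{j=2}^k\binom{k}{j-1}\frac{\zeta(j)B_j}{j},
\]
and since all sums are finite we may reorganise them over the triples $2\le j\le k\le n\le N$, obtaining
\[
\mathcal{L}\circ\mathcal{E}(\beta)_N=\sum_{j=2}^N\frac{\zeta(j)B_j}{j}\sum_{n=0}^N\binom{N}{n}2^n\,c_{n,j},\qquad c_{n,j}:=\sum_{k=j}^nS(n,k)(-1)^kk!\binom{k}{j-1}
\]
(the inner $n$-sum may start at $0$ since $c_{n,j}=0$ for $n<j$). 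Thus it suffices to prove that $c_{n,j}=(j-1)!\bigl[(-1)^nS(n+1,j)+(-1)^jS(n,j-1)\bigr]$ for all $n\ge0$, $j\ge2$: indeed the $n=0$ and $n=1$ terms then vanish (for $j\ge2$ one has $S(1,j)=S(0,j-1)=0$, and $-S(2,j)+(-1)^jS(1,j-1)=0$), so $\sum_{n=0}^N\binom{N}{n}2^n\,c_{n,j}$ reduces to $T_{N,j}$.

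To establish the identity for $c_{n,j}$, fix $j\ge2$ and let $\beta^{(j)}$ be the sequence with $\beta^{(j)}_k=\binom{k}{j-1}$ for $k\ge j$ and $\beta^{(j)}_k=0$ otherwise, so that $c_{n,j}=\mathcal{E}(\beta^{(j)})_n$. From $\sum_{k\ge m}\binom{k}{m}x^k=x^m/(1-x)^{m+1}$, subtracting the single term $k=j-1$ (where $\binom{j-1}{j-1}=1$), the ordinary generating function is $F_{\beta^{(j)}}(x)=x^{j-1}/(1-x)^j-x^{j-1}$, of radius $1$. Lemma~\ref{prop:petitesomme} then yields, for $t\in[0,\log 2)$,
\[
\sum_{n\ge0}\frac{(-1)^n}{n!}c_{n,j}\,t^n=F_{\beta^{(j)}}(1-e^{-t})=(1-e^{-t})^{j-1}(e^{jt}-1)=e^t(e^t-1)^{j-1}+(-1)^j(e^{-t}-1)^{j-1},
\]
using $1-e^{-t}=e^{-t}(e^t-1)=-(e^{-t}-1)$. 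On the other hand, differentiating the classical identity $\sum_{m\ge0}S(m,j)t^m/m!=(e^t-1)^j/j!$ gives $e^t(e^t-1)^{j-1}=(j-1)!\sum_{n\ge0}S(n+1,j)t^n/n!$, while replacing $t$ by $-t$ in $\sum_{m\ge0}S(m,j-1)t^m/m!=(e^t-1)^{j-1}/(j-1)!$ gives $(e^{-t}-1)^{j-1}=(j-1)!\sum_{n\ge0}(-1)^nS(n,j-1)t^n/n!$. Substituting these two expansions above and comparing coefficients of $t^n$ gives exactly $c_{n,j}=(j-1)!\bigl[(-1)^nS(n+1,j)+(-1)^jS(n,j-1)\bigr]$, hence $\sum_{n=0}^N\binom{N}{n}2^n\,c_{n,j}=T_{N,j}$ and the lemma follows.

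The only genuinely delicate point is the bookkeeping in the generating-function step: the sum defining $c_{n,j}$ starts at $k=j$, whereas $\binom{k}{j-1}$ is already nonzero at $k=j-1$, so $F_{\beta^{(j)}}$ must carry the correction $-x^{j-1}$, and this correction is precisely what produces the summand $(-1)^jS(n,j-1)$. The second point requiring care is getting the shift $S(n+1,j)$ right, which costs one differentiation of the standard exponential generating function for Stirling numbers of the second kind; after that, everything reduces to routine manipulation of finite sums and power series, entirely in the spirit of the proofs of the preceding lemmas.
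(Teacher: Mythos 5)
Your proof is correct and follows essentially the same route as the paper: unfold the definitions, reorganize the triple sum so that the coefficient of $\zeta(j)B_j/j$ is $\sum_n \binom{N}{n}2^n\,\mathcal{E}(u)_n$ for a binomial sequence $u$, and evaluate $\mathcal{E}(u)_n$ via Lemma \ref{prop:petitesomme} and the exponential generating function of the Stirling numbers. The only (cosmetic) differences are that the paper handles the $k=j-1$ boundary term by adding and subtracting it outside the generating function, whereas you carry the correction $-x^{j-1}$ inside $F_{\beta^{(j)}}$, and that the paper obtains $S(n+1,j)$ from the recurrence $jS(n,j)+S(n,j-1)=S(n+1,j)$ while you get it by differentiating the Stirling EGF.
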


\begin{proof}
Let us expand and rewrite
\bean
\cal L\circ\cal E(\beta)_N & = & \sum_{n=0}^N \binom{N}{n} 2^{n} \sum_{k=0}^n  S(n,k)(-1)^k k! \sum_{j=2}^{k} \binom{k}{j-1}  \frac{\zeta(j)B_{j}}{j}\\
    & = & \sum_{j=2}^N \sum_{n=2}^N \binom{N}{n} 2^{n} W_{n,j}\frac{\zeta(j)B_{j}}{j},
\eean
where $W_{n,j} = 0$ if $n < j$, and for $n \ge j$,
\bean
W_{n,j} & = & \sum_{k=j}^n S(n,k)(-1)^k k! {k \choose j-1}\\
    & = &\sum_{k=j-1}^n S(n,k)(-1)^k k! {k \choose j-1} - (-1)^{j-1}(j-1)! S(n,j-1).
\eean
Fix $j\ge2$. Then
\bean
\sum_{k=j-1}^n S(n,k)(-1)^k k! {k \choose j-1} & = & \cal E(u)_n,
\eean
where $u_k={k \choose j-1}\1_{k\ge j-1}$. We have 
\bean
F_u(x)=\sum_{k=0}^\infty u_k x^k & = &  \sum_{k=j-1}^\infty {k \choose j-1} x^{k}  \\
    & = & x^{j-1}  \sum_{k=j-1}^\infty {k \choose j-1} x^{k-(j-1)}  \\
    & = & \frac{x^{j-1}}{(1-x)^{j}}.
\eean
Thus
\bean
F_u(1-e^{-t}) & = & e^{t j}(1-e^{-t})^{j-1} \\
    & = &e^t (e^t-1)^{j-1}\\
    & = & (e^t-1)^{j}+(e^t-1)^{j-1}.
\eean
Since
\bean
(e^t-1)^{j} & = & j ! \sum_{n \ge j} \frac{S(n,j)}{n!}t^n \\ (e^t-1)^{j-1} & = & (j-1) ! \sum_{n \ge j-1} \frac{S(n,j-1)}{n !}t^n,
\eean
we deduce that for $n \ge j$ : 
\bean
U_{n,j} (-1)^n = (j-1)! \left[ j S(n,j) + S(n,j-1) \right] = (j-1)! S(n+1,j).
\eean
Hence
\bean
W_{n,j} = (j-1)! \left[(-1)^n S(n+1,j) + (-1)^j S(n,j-1) \right],
\eean
as desired.
\end{proof}

\bigskip

\section{Elementary Proof of Theorem \ref{second}} \label{sec:elementary}

\subsection{Decomposition of $A^{(k)}(1)$}

Set
\bea
h(x) & = & \frac{1}{e^x-1}, \qquad x>0.
\eea
We can differentiate $A$ under the integral sign:
\bea
A^{(k)}(1) & = & \int_0^\infty \left(\frac{(-1)^k k!}{x}-x^k h^{(k)}(x)\right)\left(\frac{1}{x}-\frac{1}{e^{x}-1}\right)dx,
\eea
by grouping the divergent terms as $x\to0$ inside the integral of $A^{(k)}(v)$ and using e.g. Lemma \ref{e-diff-h} with a uniform bound for $v\in [1-\eta,1+\eta]$ (small $\eta>0$). We can then develop and obtain:

\begin{lemma}
For all $k\ge1$,
\bean
A^{(k)}(1) & = & D_{3,k}^\e + D_{2,k}^\e -D_{1,k}^\e,
\eean
where
\bean
D_{1,k}^\e & = & \int_\e^\infty x^{k-1}h^{(k)}(x)dx \\
D_{2,k}^\e & = & \int_\e^\infty \frac{x^k h^{(k)}(x)}{e^{x}-1}dx \\
D_{3,k}^\e & = & (-1)^k k!\int_\e^\infty \frac{1}{x}\left(\frac{1}{x}-\frac{1}{e^{x}-1}\right)dx.
\eean
\end{lemma}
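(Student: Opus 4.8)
The plan is twofold: first derive the integral representation
\[
A^{(k)}(1)=\int_0^\infty\Bigl(\tfrac{(-1)^k k!}{x}-x^k h^{(k)}(x)\Bigr)\Bigl(\tfrac1x-\tfrac1{e^x-1}\Bigr)dx
\]
by differentiating $k$ times under the integral sign, and then \emph{develop} the product into the three announced pieces. Set $\phi_v(x)=\tfrac1{xv}-\tfrac1{e^{xv}-1}$, so that $A(v)=\int_0^\infty\phi_v(x)\phi_1(x)\,dx$ and $\partial_v^k\phi_v(x)=\tfrac{(-1)^k k!}{x\,v^{k+1}}-x^k h^{(k)}(xv)$. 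The delicate point is the legitimacy of the differentiation near $x=0$, where each of the two summands of $\phi_v$ (and of $\partial_v^k\phi_v$) is unbounded. I would resolve this by peeling off the simple pole of $h$ at the origin: writing $h(y)=\tfrac1y+\tilde h(y)$ with $\tilde h$ smooth on $[0,\infty)$ gives $h^{(k)}(y)=\tfrac{(-1)^k k!}{y^{k+1}}+\tilde h^{(k)}(y)$, so the polar parts cancel and $\partial_v^k\phi_v(x)=-x^k\tilde h^{(k)}(xv)$. This is $O(x^k)$ as $x\to0^+$ and $O(1/x)$ as $x\to+\infty$, uniformly for $v\in[1-\eta,1+\eta]$; since $\phi_1$ is bounded near $0$ and itself $O(1/x)$ at infinity, the functions $\phi_1(x)\,\partial_v^j\phi_v(x)$, $1\le j\le k$, are dominated by a fixed integrable function of $x$ for all such $v$ — this is exactly the content of Lemma~\ref{e-diff-h}. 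The standard theorem on differentiation under the integral sign (a consequence of dominated convergence) then yields the first display at $v=1$.

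The second step is elementary algebra. Expanding the product pointwise,
\[
\Bigl(\tfrac{(-1)^k k!}{x}-x^k h^{(k)}(x)\Bigr)\Bigl(\tfrac1x-\tfrac1{e^x-1}\Bigr)=(-1)^k k!\,\tfrac1x\Bigl(\tfrac1x-\tfrac1{e^x-1}\Bigr)-x^{k-1}h^{(k)}(x)+\tfrac{x^k h^{(k)}(x)}{e^x-1},
\]
and the three summands are precisely the integrands of $D_{3,k}^\varepsilon$, of $-D_{1,k}^\varepsilon$, and of $D_{2,k}^\varepsilon$. All three decay at $+\infty$; but near $x=0$, using $h(y)=\tfrac1y+\tilde h(y)$ once more, the first behaves like $\tfrac{(-1)^k k!}{2x}$ and the other two like $\pm\tfrac{(-1)^k k!}{x^2}$, so none is separately integrable at the origin — which is why the cut-off $\varepsilon>0$ must be inserted before splitting (the three $D_{\cdot,k}^\varepsilon$ are each divergent as $\varepsilon\to0$, their combination is not). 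Integrating the displayed identity over $[\varepsilon,\infty)$ gives $D_{3,k}^\varepsilon+D_{2,k}^\varepsilon-D_{1,k}^\varepsilon=\int_\varepsilon^\infty(\cdots)\,dx$; since the full integrand is $O(x^k)$ near $0$, this differs from $A^{(k)}(1)=\int_0^\infty(\cdots)\,dx$ only by the remainder $\int_0^\varepsilon(\cdots)\,dx=O(\varepsilon^{k+1})$, which vanishes as $\varepsilon\to0^+$; this is the sense in which the stated equality is to be read, and it is how it is used afterwards, where the individual $1/\varepsilon$ and $\log\varepsilon$ divergences of the three pieces cancel in the combination.

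The step I expect to be the real obstacle is the uniform domination near $x=0$: the naive termwise bounds on $\phi_v$ blow up there, and everything hinges on the exact cancellation $\partial_v^k\phi_v(x)=-x^k\tilde h^{(k)}(xv)$ coming from removing the pole of $h$ at the origin (the \emph{grouping of the divergent terms}). Once that uniform bound is in hand, the remainder is the routine expansion above together with the bookkeeping of the three singular pieces under the $\varepsilon$-regularization.
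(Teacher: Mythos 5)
Your proposal is correct and follows essentially the same route as the paper, which only sketches this step: your decomposition $h(y)=\tfrac1y+\tilde h(y)$ with $\tilde h$ analytic at $0$, giving $\partial_v^k\phi_v(x)=-x^k\tilde h^{(k)}(xv)$, is exactly the ``grouping of the divergent terms'' the paper invokes to justify differentiation under the integral sign, and the subsequent expansion into the three $\varepsilon$-truncated pieces (valid up to an $O(\varepsilon^{k+1})$ remainder, which is indeed how the identity is used) is the same. The only quibble is that the uniform domination is not literally ``the content of Lemma~\ref{e-diff-h}'' (that lemma is an asymptotic expansion used later for the $Z_\varepsilon$ computations); your self-contained bound via $\tilde h$ is what actually does the work.
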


%\section{Derivatives of the auto-correlation function}

\subsection{The derivatives of $h$}

Set
\bea
\alpha_{k,p} & = & (-1)^p S(k,p) p!
\eea
The integrals $D_{1,k}^\e$ and $D_{2,k}^\e$ involve the derivatives of $h$. The paper \cite{GQ14} gives some expressions of these ones and interesting applications. For our purpose, we need a different formula, especially having an $e^x$ within the numerator:
\begin{lemma}
For all $k\ge1$,
\bea \label{h(k)}
h^{(k)}(x) & = & \sum_{p=1}^k \alpha_{k,p} \frac{e^{px}}{(e^x-1)^{p+1}}.
\eea
\end{lemma}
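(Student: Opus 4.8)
The statement to prove is the closed form
\[
h^{(k)}(x) = \sum_{p=1}^k \alpha_{k,p}\frac{e^{px}}{(e^x-1)^{p+1}},\qquad \alpha_{k,p}=(-1)^p S(k,p)p!,
\]
for $h(x)=1/(e^x-1)$.

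\textbf{Plan of proof.} The natural approach is induction on $k$, differentiating the closed form and re-indexing using the recurrence $S(k+1,p)=S(k,p-1)+p\,S(k,p)$ for Stirling numbers of the second kind. For the base case $k=1$, one computes directly $h'(x) = -e^x/(e^x-1)^2$, which matches the formula since $\alpha_{1,1}=(-1)^1 S(1,1)\,1! = -1$ and the only term is $p=1$. For the inductive step, assume the formula holds for some $k\ge 1$ and differentiate each summand: by the quotient rule,
\[
\frac{d}{dx}\left(\frac{e^{px}}{(e^x-1)^{p+1}}\right)
= \frac{p\,e^{px}}{(e^x-1)^{p+1}} - \frac{(p+1)e^{px}e^x}{(e^x-1)^{p+2}}
= \frac{p\,e^{px}}{(e^x-1)^{p+1}} - \frac{(p+1)e^{(p+1)x}}{(e^x-1)^{p+2}},
\]
so each term of index $p$ contributes a multiple of the index-$p$ term and a multiple of the index-$(p+1)$ term.

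\textbf{Key bookkeeping step.} Collecting coefficients, the coefficient of $e^{qx}/(e^x-1)^{q+1}$ in $h^{(k+1)}(x)$ is $q\,\alpha_{k,q} - q\,\alpha_{k,q-1}$ (the first piece from the $p=q$ term, the second from the $p=q-1$ term shifted up). Using $\alpha_{k,p}=(-1)^pS(k,p)p!$, this equals
\[
q(-1)^q S(k,q)q! - q(-1)^{q-1}S(k,q-1)(q-1)!
= (-1)^q q!\bigl(q\,S(k,q) + S(k,q-1)\bigr)
= (-1)^q q!\,S(k+1,q) = \alpha_{k+1,q},
\]
exactly as required, where the Stirling recurrence is applied in the penultimate step. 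One should also check the range of summation is consistent: the $p=k$ term produces a $p=k+1$ contribution, giving the top index $k+1$ with coefficient $-(k+1)\alpha_{k,k} = -(k+1)(-1)^k k! = (-1)^{k+1}(k+1)! = \alpha_{k+1,k+1}$ (using $S(k+1,k+1)=1$), and the $p=1$ term still yields a $p=1$ contribution with the right coefficient since $S(k,0)=0$.

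\textbf{Expected main obstacle.} This proof is essentially a routine induction; there is no serious obstacle. The only mildly delicate points are the correct handling of the index shift in the re-indexing (making sure endpoint terms $p=1$ and $p=k+1$ behave) and verifying the sign conventions in $\alpha_{k,p}$ are carried through the Stirling recurrence without error. An alternative, slicker route that avoids explicit re-indexing is to observe that $h$ satisfies the Riccati-type relation $h'=-h-h^2$ (since $h'=-e^x/(e^x-1)^2$ and $-h-h^2 = -\frac{1}{e^x-1}-\frac{1}{(e^x-1)^2} = -\frac{e^x}{(e^x-1)^2}$); then one could set up a generating-function or operator argument relating the expansion to Lemma~\ref{Dn}, but the direct induction above is the shortest path and is what I would write.
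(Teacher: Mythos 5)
Your proof is correct: the base case, the differentiation of each summand, the collection of the coefficient of $e^{qx}/(e^x-1)^{q+1}$ as $q\alpha_{k,q}-q\alpha_{k,q-1}=\alpha_{k+1,q}$ via the recurrence $S(k+1,q)=S(k,q-1)+qS(k,q)$, and the endpoint checks at $p=1$ and $p=k+1$ all go through. However, it takes a genuinely different route from the paper. The paper does not induct here at all: it simply writes $h(x)=\varphi(e^x)$ with $\varphi(y)=\frac{1}{y-1}$, notes $\varphi^{(p)}(y)=\frac{(-1)^p p!}{(y-1)^{p+1}}$, and invokes the already-established composition formula of Lemma \ref{compo-exp}, namely $\phi^{(k)}(x)=\sum_{p}S(k,p)e^{px}\varphi^{(p)}(e^x)$, which yields the identity in one line. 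That composition lemma is itself proved by an induction on the Cauchy--Euler operator (Lemma \ref{Dn}) using exactly the same Stirling recurrence you use, so your argument essentially re-derives that infrastructure in the special case $\varphi(y)=1/(y-1)$. What your version buys is self-containedness (no appeal to Lemmas \ref{Dn} and \ref{compo-exp}); what the paper's version buys is brevity and a cleaner conceptual explanation of \emph{why} Stirling numbers of the second kind appear, since they are intrinsic to differentiating any function composed with the exponential. You in fact gesture at this alternative in your closing remark; it is precisely the route the paper takes.
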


\begin{proof}
Setting $\varphi(x) = \frac{1}{x-1}$, we have for all $p \ge 1$,
\begin{equation*}
\varphi^{(p)}(x) = \frac{(-1)^p p!}{(x-1)^{p+1}}.
\end{equation*}
Noting that $h(x)=\varphi(e^x)$, we apply Lemma \ref{compo-exp} to deduce
\bean
h^{(k)}(x) & = & \sum_{p=1}^k S(k,p) e^{px} \frac{(-1)^p p!}{(e^x-1)^{p+1}},
\eean
as claimed.
\end{proof}

\begin{lemma}
For $k\ge0$ and $p\ge1$,
\bea
\alpha_{k,p-1}-\alpha_{k,p}  & = &  -\frac{\alpha_{k+1,p}}{p}.
\eea
\end{lemma}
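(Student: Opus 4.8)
The plan is to prove this purely algebraically by unwinding the definition $\alpha_{k,p}=(-1)^p S(k,p)\,p!$ and then invoking the standard recurrence for Stirling numbers of the second kind, $S(k+1,p)=S(k,p-1)+p\,S(k,p)$, which is already recorded in Section~\ref{stirling}. There is no analytic content here; the only thing to watch is the interplay of the signs $(-1)^{p-1}$ vs.\ $(-1)^p$ and the factorials $(p-1)!$ vs.\ $p!$.

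Concretely, I would first write the left-hand side as
\bean
\alpha_{k,p-1}-\alpha_{k,p} & = & (-1)^{p-1}S(k,p-1)(p-1)! - (-1)^p S(k,p)\,p! \\
 & = & (-1)^{p-1}(p-1)!\,\bigl[\,S(k,p-1) + p\,S(k,p)\,\bigr],
\eean
where in the second line I used $-(-1)^p p! = (-1)^{p-1}(p-1)!\cdot p$ to pull out the common factor $(-1)^{p-1}(p-1)!$. Then I would apply the recurrence $S(k+1,p)=S(k,p-1)+p\,S(k,p)$ to collapse the bracket, giving
\bean
\alpha_{k,p-1}-\alpha_{k,p} & = & (-1)^{p-1}(p-1)!\,S(k+1,p).
\eean
Finally I would check that this equals the right-hand side: $-\alpha_{k+1,p}/p = -(-1)^p S(k+1,p)\,p!/p = (-1)^{p-1}(p-1)!\,S(k+1,p)$, which matches.

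The last point to dispatch is the boundary behaviour. For $k\ge 1$ and $1\le p\le k+1$ everything is literal; for $p=1$ the term $S(k,p-1)=S(k,0)$ and for $k=0$ the terms $S(0,p)$, $S(0,p-1)$ are handled by the conventions fixed in Section~\ref{stirling} ($S(n,k)=0$ for $k\le 0$ or $k\ge n$, with $S(0,0)=1$), and the Stirling recurrence continues to hold under those conventions, so the identity is valid for all $k\ge 0$ and $p\ge 1$ as stated. I do not anticipate any genuine obstacle; the only mild care needed is making sure the sign/factorial rearrangement and the edge cases are stated cleanly.
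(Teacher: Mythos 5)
Your proof is correct and is essentially identical to the paper's: both unwind the definition of $\alpha_{k,p}$, factor out $(-1)^{p-1}(p-1)!$, and collapse the bracket via the recurrence $S(k+1,p)=S(k,p-1)+p\,S(k,p)$. The extra attention you pay to the boundary cases is harmless and consistent with the paper's conventions.
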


\begin{proof}
This only requires the recursive definition of $S(k+1,p)$:
\bean
\alpha_{k,p-1}-\alpha_{k,p} & = & (-1)^{p-1} S(k,p-1) (p-1)!-(-1)^p S(k,p) p! \\
    & = & (-1)^{p+1} (p-1)! \left(S(k,p-1)+pS(k,p)\right) \\
    & = & (-1)^{p+1} (p-1)! S(k+1,p),
%    & = & -\frac{\alpha_{k+1,p}}{p},
\eean
and the claim follows.
\end{proof}

\subsection{Asymptotic expansions involving $h$}
We gather here several asymptotic expansions, useful for the sequel.

\begin{lemma} \label{e-diff-h}
For all $a\ge 0$, as $\e\to 0$,
\bea
\epsilon^a \frac{e^{a\epsilon}}{(e^\epsilon-1)^{a+1}} & = & \frac{1}{\epsilon} + \frac{a-1}{2} +o(1).
\eea
%In particular,
%\bea
%\frac{1}{e^\epsilon-1} & = & \frac{1}{\epsilon} - \frac{1}{2} +o(1).
%\eea
\end{lemma}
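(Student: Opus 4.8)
The plan is to perform a standard Laurent expansion of the left-hand side as $\epsilon \to 0$. Write $E = e^\epsilon - 1$, so that $E = \epsilon + \tfrac{\epsilon^2}{2} + \tfrac{\epsilon^3}{6} + O(\epsilon^4)$, and also expand $e^{a\epsilon} = 1 + a\epsilon + O(\epsilon^2)$. The quantity $\epsilon^a e^{a\epsilon}(e^\epsilon-1)^{-(a+1)}$ can then be rewritten as
\bea
\frac{1}{\epsilon}\cdot\frac{(\epsilon/E)^{a+1} e^{a\epsilon}}{1} = \frac{1}{\epsilon}\left(\frac{\epsilon}{E}\right)^{a+1} e^{a\epsilon},
\eea
so the task reduces to computing the expansion of $\left(\tfrac{\epsilon}{E}\right)^{a+1}e^{a\epsilon}$ up to order $\epsilon$, then dividing by $\epsilon$.

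First I would compute $\tfrac{\epsilon}{E} = \tfrac{1}{1 + \epsilon/2 + \epsilon^2/6 + O(\epsilon^3)} = 1 - \tfrac{\epsilon}{2} + O(\epsilon^2)$, using the geometric series $1/(1+u) = 1 - u + O(u^2)$ with $u = \epsilon/2 + O(\epsilon^2)$. Raising to the power $a+1$ via $(1+w)^{a+1} = 1 + (a+1)w + O(w^2)$ with $w = -\epsilon/2 + O(\epsilon^2)$ gives $\left(\tfrac{\epsilon}{E}\right)^{a+1} = 1 - \tfrac{(a+1)\epsilon}{2} + O(\epsilon^2)$. Multiplying by $e^{a\epsilon} = 1 + a\epsilon + O(\epsilon^2)$ yields
\bea
\left(\frac{\epsilon}{E}\right)^{a+1} e^{a\epsilon} = 1 + \left(a - \frac{a+1}{2}\right)\epsilon + O(\epsilon^2) = 1 + \frac{a-1}{2}\epsilon + O(\epsilon^2).
\eea
Dividing by $\epsilon$ produces exactly $\tfrac{1}{\epsilon} + \tfrac{a-1}{2} + o(1)$, which is the claimed identity. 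Note the case $a = 0$ is consistent: the left-hand side becomes $\tfrac{1}{e^\epsilon-1} = \tfrac{1}{\epsilon} - \tfrac{1}{2} + o(1)$, matching $\tfrac{a-1}{2} = -\tfrac12$.

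The computation is entirely routine; there is no real obstacle beyond bookkeeping the orders correctly. The only mild subtlety is ensuring that the error terms genuinely collapse to $o(1)$ after division by $\epsilon$ — i.e., that one has tracked the expansion of $(\epsilon/E)^{a+1} e^{a\epsilon}$ to order $\epsilon^1$ with an $O(\epsilon^2)$ remainder, uniformly in the fixed parameter $a$. Since $a \ge 0$ is a fixed real and all the series involved ($1/(1+u)$, $(1+w)^{a+1}$, $e^{a\epsilon}$) converge for $\epsilon$ in a neighborhood of $0$, this is immediate. One could alternatively phrase the whole argument by taking logarithms: $\log\big(\epsilon^a e^{a\epsilon}(e^\epsilon-1)^{-(a+1)}\big) = a\log\epsilon + a\epsilon - (a+1)\log(e^\epsilon-1)$, expand $\log(e^\epsilon - 1) = \log\epsilon + \tfrac{\epsilon}{2} + O(\epsilon^2)$, collect to get $-\log\epsilon + \tfrac{a-1}{2}\epsilon + O(\epsilon^2)$, and exponentiate; but the direct route above is shortest.
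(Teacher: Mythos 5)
Your proof is correct and follows essentially the same route as the paper: both expand $e^{a\epsilon}=1+a\epsilon+o(\epsilon)$ and $(e^\epsilon-1)/\epsilon=1+\epsilon/2+o(\epsilon)$, raise the latter to the power $-(a+1)$ via the binomial expansion, and collect the order-$\epsilon$ coefficient $a-\tfrac{a+1}{2}=\tfrac{a-1}{2}$ before dividing by $\epsilon$. The sanity check at $a=0$ is a nice touch but the argument is otherwise identical to the paper's.
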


\begin{proof}
We have
\bean
\epsilon^a \frac{e^{a\epsilon}}{(e^\epsilon-1)^{a+1}} %& = & \epsilon^a \frac{1+a\epsilon+\cdots}{(\epsilon+\epsilon^2/2+\cdots)^{a+1}} \\
     & = & \frac{1}{\epsilon}\frac{1+a\epsilon+o(\epsilon)}{(1+\epsilon/2+o(\epsilon))^{a+1}}.
\eean
But
\bean
(1+\epsilon/2+o(\epsilon))^{-(a+1)} & = & 1-\frac{a+1}{2}\epsilon +o(\epsilon),
\eean
and the conclusion follows.
\end{proof}

\begin{lemma}
As $\e\to 0$,
\bean
I(\epsilon)=\int_\e^\infty \frac{1}{t}\left(\frac{1}{t}-\frac{1}{e^{t}-1}\right)dt & = & -\frac{\log \epsilon}{2} + C + o(1),\\
\int_\e^\infty \frac{1}{t(e^{t}-1)}dt & = & \frac{1}{\epsilon}+\frac{\log \epsilon}{2} - C + o(1),\\
\int_\e^\infty \frac{1}{(e^{t}-1)^2}dt & = & \log(\epsilon) + \frac{1}{\epsilon} - \frac{1}{2} +o(1).
\eean
\end{lemma}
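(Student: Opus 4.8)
The plan is to compute each of the three integrals by splitting off the elementary parts that carry the singularity at $0$ and recognizing standard Mellin-type integrals. For the first integral, I would write $\frac{1}{t}\left(\frac{1}{t}-\frac{1}{e^t-1}\right) = \frac{d}{dt}\left(-\frac{1}{t}\right)\cdot\left(\frac{1}{t}-\frac{1}{e^t-1}\right)$ is not quite the cleanest route; instead I would observe that $\frac{1}{t}-\frac{1}{e^t-1} = \frac12 - \frac{t}{12} + O(t^2)$ near $0$, so the integrand behaves like $\frac{1}{2t}$, producing the $-\frac{\log\epsilon}{2}$ term, and then identify the constant $C$ with the value $\frac{\log(2\pi)-\gamma}{2}$ via the known Mellin integral \eqref{mellin-gamma-zeta}: indeed $\int_0^\infty\left(\frac{1}{e^t-1}-\frac1t\right)t^{w-1}dt = \Gamma(w)\zeta(w)$ on $0<\Re(w)<1$, whose analytic behavior as $w\to 0$ (using $\Gamma(w)\zeta(w) \sim -\frac{1}{2w} + \left(\frac{\gamma - \log(2\pi)}{2}\right) + \cdots$, from $\zeta(0)=-\tfrac12$ and $\zeta'(0)=-\tfrac12\log(2\pi)$) gives exactly the regularized value. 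Concretely, $I(\epsilon) = \int_\epsilon^\infty t^{-1}\cdot t^{w-1}\big|_{\text{reg}}$ is the $w\to 0^+$ limit of the analytic continuation, and matching the pole in $w$ against the $\log\epsilon$ divergence yields $I(\epsilon) = -\frac{\log\epsilon}{2} + C + o(1)$.

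For the second integral, I would use the identity $\frac{1}{t(e^t-1)} = \frac{1}{t}\left(\frac{1}{e^t-1}-\frac{1}{t}\right) + \frac{1}{t^2}$. Integrating $\frac{1}{t^2}$ over $[\epsilon,\infty)$ gives $\frac{1}{\epsilon}$, and the remaining piece is $-I(\epsilon)$ from the first part, giving $\frac{1}{\epsilon} + \frac{\log\epsilon}{2} - C + o(1)$. For the third integral, I would similarly expand $\frac{1}{(e^t-1)^2}$ near $0$: since $e^t - 1 = t(1 + \tfrac t2 + \tfrac{t^2}{6}+\cdots)$, one gets $\frac{1}{(e^t-1)^2} = \frac{1}{t^2} - \frac{1}{t} + \frac{1}{12} + O(t)$, so subtract $\frac{1}{t^2} - \frac{1}{t}$ (integrable tails aside) to isolate the divergences: $\int_\epsilon \frac{1}{t^2} = \frac{1}{\epsilon}$ and $\int_\epsilon^1 \left(-\frac1t\right) = \log\epsilon$. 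The finite remainder is a convergent integral whose value I would pin down by relating it to the derivative of $\Gamma(w)\zeta(w)$ (or to $\int_0^\infty\left(\frac{1}{(e^t-1)^2} - \frac{1}{t^2}+\frac1t\right)dt$, a known constant) — and checking it equals $-\tfrac12$, e.g. by using $\frac{1}{(e^t-1)^2} = \frac{1}{e^t-1}\cdot\frac{e^t}{e^t-1} - \frac{1}{e^t-1}$ together with $\frac{d}{dt}\frac{-1}{e^t-1} = \frac{e^t}{(e^t-1)^2}$ to integrate by parts against $\frac1t$ and reduce to the already-computed integrals.

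I expect the main obstacle to be the bookkeeping of constants: each of the three results is an equality up to $o(1)$ with a specific constant ($C$, $-C$, $-\tfrac12$), and the delicate point is to extract these finite parts correctly rather than merely the leading $\log\epsilon$ or $1/\epsilon$ divergence. The cleanest uniform device is to work with $\int_\epsilon^\infty g(t)\,t^{w-1}dt$ for the relevant regularized $g$, take the analytic continuation in $w$, and then send $w\to 0$; the residue at $w=0$ reproduces the $\log\epsilon$, and the finite part is read off from the Laurent expansion of $\Gamma(w)\zeta(w)$ (and its derivative, for the third integral). Alternatively — and this is probably what the paper does, given the elementary-proof section — one avoids Mellin altogether: split each integrand as (explicit elementary singular part) $+$ (integrable remainder), integrate the singular part exactly, and evaluate the remainder by recognizing it as a telescoping combination of the other two integrals, closing the system of three equations. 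Either way, the hard part is purely the constant-tracking, not any conceptual difficulty.
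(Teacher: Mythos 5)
Your proposal is correct, and for the second and third integrals it follows essentially the same skeleton as the paper: the second integral is reduced to the first via $\frac{1}{t(e^t-1)}=\frac{1}{t^2}-\frac{1}{t}\left(\frac{1}{t}-\frac{1}{e^t-1}\right)$, and the third follows from $\frac{1}{(e^t-1)^2}=\frac{e^t}{(e^t-1)^2}-\frac{1}{e^t-1}$, whose two pieces have the explicit antiderivatives $-1/(e^t-1)$ and $\log(1-e^{-t})$ — so no integration by parts against $1/t$ is needed, and that remark in your last paragraph is superfluous. The genuine difference lies in how the constant $C$ enters the first expansion. The paper does not re-derive it: it quotes Balazard's identity $C=1-\int_0^1\left(\frac{1}{t(e^t-1)}-\frac{1}{t^2}+\frac{1}{2t}\right)dt-\int_1^\infty\frac{dt}{t(e^t-1)}$ and simply rearranges it into $C=I(\epsilon)+\frac{\log\epsilon}{2}+o(1)$. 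You instead obtain the constant from the Laurent expansion of $\Gamma(w)\zeta(w)$ at $w=0$ (from $\zeta(0)=-\frac12$ and $\zeta'(0)=-\frac12\log(2\pi)$ one gets $\Gamma(w)\zeta(w)=-\frac{1}{2w}-C+O(w)$), cancelling the pole against the $\epsilon^w/(2w)$ contribution of the truncation near $0$; this is a valid and more self-contained route — it is essentially the computation hidden behind the citation to Balazard — at the price of importing the value of $\zeta'(0)$. Two minor slips that do not affect the argument: the constant term in the expansion of $1/(e^t-1)^2$ near $0$ is $\frac{5}{12}$, not $\frac{1}{12}$ (only the singular part $\frac{1}{t^2}-\frac{1}{t}$ is used anyway), and your identification of the finite part of $I(\epsilon)$ should be carried out slightly more carefully than "matching the pole against the $\log\epsilon$ divergence", namely by writing $\int_\epsilon^\infty\left(\frac{1}{e^t-1}-\frac{1}{t}\right)t^{w-1}dt=\Gamma(w)\zeta(w)+\frac{\epsilon^w}{2w}+O(\epsilon^{1+\Re(w)})$ before letting $w\to0^+$.
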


\begin{proof}
In \cite{Bal18}, Balazard identified the constant $C=\frac 12(\log 2\pi -\gamma)$, see \cite{DH21a} for his proof. Rewriting $C$, we obtain
\bean
C & = & 1-\int_0^1 \left (\frac{1}{t(e^t-1)}-\frac 1{t^2}+\frac 1{2t}\right) dt-\int_1^{\infty} \frac{dt}{t(e^t-1)}\\
    & = & \int_0^1 \left[\frac{1}{t}\left(\frac{1}{t}-\frac{1}{e^{t}-1}\right)-\frac 1{2t}\right] dt + \int_1^\infty \frac{1}{t}\left(\frac{1}{t}-\frac{1}{e^{t}-1}\right)dt \\
    & = & I(\epsilon) - \int_\epsilon^1 \frac{dt}{2t}+o(1) \\
    & = & I(\epsilon) +\frac{\log \epsilon}{2}+o(1).
\eean
The asymptotic development of $I(\epsilon)$ and the second quantity then follow. Moreover
\bean
\int_\e^\infty \frac{-1}{(e^{t}-1)^2}dt & = & \int_\e^\infty \frac{e^t-1}{(e^{t}-1)^2}dt - \int_\e^\infty \frac{e^t}{(e^{t}-1)^2}dt \\
    & = & [\log(1-e^{-t})]_\epsilon^\infty + \left[\frac{1}{e^{t}-1}\right]_\epsilon^\infty \\
    & = & -\log(1-e^{-\epsilon})- \frac{1}{e^{\epsilon}-1}\\
    & = & -\log(\epsilon) - \frac{1}{\epsilon} + \frac{1}{2} +o(1),
\eean
as desired.
\end{proof}

The following elementary quantities will be useful in the next section.
%study $Z_\epsilon(k,j)$.
\begin{lemma} \label{sumq}
As $\e\to 0$,
\bea
\sum_{q\ge1}\frac{e^{-\epsilon q}}{q} & = & -\log \epsilon + o(1),
\eea
and for all $a\ge 1$, 
\bea
\epsilon^a\sum_{q\ge1}q^{a-1}e^{-\epsilon q} & = & (a-1)! + o(1) \\
\epsilon^a\sum_{q\ge1}q^{a}e^{-\epsilon q} & = & \frac{a!}{\epsilon} + o(1).
\eea
\end{lemma}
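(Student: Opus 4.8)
The plan is to establish the three asymptotic expansions in Lemma~\ref{sumq} by recognizing each left-hand side as a polylogarithm-type series whose small-$\epsilon$ behaviour is governed by the Mellin-transform / Abel-summation dictionary. For the first identity, I would write $\sum_{q\ge1} e^{-\epsilon q}/q = -\log(1-e^{-\epsilon})$ in closed form, since this is $-\log$ of the standard geometric-type sum; then $1-e^{-\epsilon} = \epsilon(1 + O(\epsilon))$ gives $-\log(1-e^{-\epsilon}) = -\log\epsilon - \log(1+O(\epsilon)) = -\log\epsilon + o(1)$, as claimed. This is the cleanest of the three and sets the pattern.

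For the remaining two identities I would treat $\sum_{q\ge1} q^{a-1} e^{-\epsilon q}$ and $\sum_{q\ge1} q^{a} e^{-\epsilon q}$ as values of (derivatives of) the function $g(\epsilon) = \sum_{q\ge1} e^{-\epsilon q} = \dfrac{1}{e^\epsilon - 1} = \dfrac{1}{\epsilon} - \dfrac12 + \dfrac{\epsilon}{12} + o(\epsilon)$. Repeated differentiation in $\epsilon$ brings down factors of $-q$, so $\sum_{q\ge1} q^{m} e^{-\epsilon q} = (-1)^m g^{(m)}(\epsilon)$. From the Laurent expansion of $g$ near $0$, the leading term of $g^{(m)}(\epsilon)$ is $(-1)^m m!\,\epsilon^{-(m-1)}$ for $m\ge1$ and the expansion is otherwise analytic, so $\sum_{q\ge1} q^{m} e^{-\epsilon q} = m!\,\epsilon^{-m} \cdot \epsilon + \cdots$; more precisely $\epsilon^{m+1}\sum_{q\ge1} q^m e^{-\epsilon q} \to m!$. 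Setting $m = a-1$ yields the second formula (with $\epsilon^a \sum q^{a-1} e^{-\epsilon q} \to (a-1)!$), and setting $m = a$ yields the third (with $\epsilon^{a+1} \sum q^a e^{-\epsilon q} \to a!$, i.e. $\epsilon^a \sum q^a e^{-\epsilon q} = a!/\epsilon + o(1/\epsilon)$, hence $= a!/\epsilon + o(1)$ is slightly weaker but sufficient — I would state exactly what is written, namely $a!/\epsilon + o(1)$, which follows once one checks the next correction term in the Laurent series is $O(1)$ after multiplication by $\epsilon^a$).

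Alternatively, and perhaps more elementarily in keeping with the self-contained spirit of this section, I would compare each sum to an integral: $\epsilon^a \sum_{q\ge1} q^{a-1} e^{-\epsilon q}$ is a Riemann sum for $\int_0^\infty x^{a-1} e^{-x}\,dx = \Gamma(a) = (a-1)!$ with mesh $\epsilon$, and standard monotonicity / Euler--Maclaurin control of the error gives convergence with $o(1)$ remainder; similarly $\epsilon^{a+1}\sum q^a e^{-\epsilon q} \to \Gamma(a+1) = a!$. I would pick whichever exposition matches the surrounding style — likely the closed-form $1/(e^\epsilon-1)$ route, since that function and its expansion are already in play via Bernoulli numbers.

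The main obstacle is purely bookkeeping: making sure the error terms are genuinely $o(1)$ after multiplication by the prescribed power $\epsilon^a$, which forces me to carry the Laurent expansion of $1/(e^\epsilon-1)$ one term further than the leading singularity in the $q^a$ case (the subleading term there, after multiplying by $\epsilon^a$, is $O(\epsilon)$, so indeed negligible). There is no conceptual difficulty, only the need to be careful that differentiating the expansion $1/(e^\epsilon-1) = \epsilon^{-1} - 1/2 + \epsilon/12 + \cdots$ term by term is legitimate on $(0,\infty)$ — which it is, since the series for $1/(e^\epsilon-1)$ converges locally uniformly with all derivatives on $(0,\infty)$.
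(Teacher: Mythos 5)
Your argument is correct, and for the two power-sum identities it takes a genuinely different route from the paper's. Both proofs start identically: the logarithmic sum is the closed form $-\log(1-e^{-\epsilon})=-\log\epsilon+O(\epsilon)$, and $\sum_{q\ge1}q^{m}e^{-\epsilon q}=(-1)^{m}h^{(m)}(\epsilon)$ with $h(x)=1/(e^{x}-1)$. The divergence is in how the small-$\epsilon$ asymptotics of $h^{(m)}$ are extracted. The paper substitutes its Stirling-number formula $h^{(m)}(x)=\sum_{p=1}^{m}(-1)^{p}p!\,S(m,p)\,e^{px}/(e^{x}-1)^{p+1}$, observes that after multiplication by the prescribed power of $\epsilon$ only the top one or two values of $p$ survive, and invokes Lemma \ref{e-diff-h} together with $(a-1)!\,S(a,a-1)=\frac{a-1}{2}a!$ to see that the two $O(1)$ contributions cancel in the third identity. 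You instead differentiate the Bernoulli expansion $h(\epsilon)=\sum_{n\ge0}B_{n}\epsilon^{n-1}/n!$ term by term (legitimate because $h(z)-1/z$ is analytic in $|z|<2\pi$), obtaining $(-1)^{m}h^{(m)}(\epsilon)=m!\,\epsilon^{-(m+1)}+(-1)^{m}\frac{B_{m+1}}{m+1}+O(\epsilon)$ in one stroke; multiplying by $\epsilon^{m}$ or $\epsilon^{m+1}$ then yields all three statements, including the stronger $o(1)$ (rather than merely $o(1/\epsilon)$) error in the third identity, since the constant term contributes only $O(\epsilon^{a})$ --- a point you correctly flag and resolve. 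Your version is shorter and makes the constant term explicit; the paper's version has the advantage of reusing machinery (the closed form for $h^{(k)}$ and Lemma \ref{e-diff-h}) that it needs anyway for $D_{1,k}^{\epsilon}$ and $D_{2,k}^{\epsilon}$, and of avoiding any discussion of differentiating a Laurent series term by term. One small slip to correct: in your intermediate sentence the leading term of $g^{(m)}(\epsilon)$ should be $(-1)^{m}m!\,\epsilon^{-(m+1)}$, not $(-1)^{m}m!\,\epsilon^{-(m-1)}$; your subsequent ``more precisely'' statement and everything deduced from it are nevertheless correct.
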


\begin{proof}
We have
\bean
\sum_{q\ge1}\frac{e^{-\epsilon q}}{q} = \int_\e^\infty \frac{dt}{e^{t}-1} =  [\log(1-e^{-t})]_\epsilon^\infty  
    =  -\log(1-e^{-\epsilon}),
\eean
and we obtain the first expansion.

For $a\ge2$, we have
\bean
\epsilon^a\sum_{q\ge1}q^{a-1}e^{-\epsilon q} & = & (-1)^{a-1}\epsilon^a h^{(a-1)}(\epsilon).
\eean
But 
\bean
\epsilon^a h^{(a-1)}(\epsilon) & = & 
\epsilon^a\sum_{p=1}^{a-1} S(a-1,p) e^{p\epsilon} \frac{(-1)^p p!}{(e^\epsilon-1)^{p+1}} \\
    & = & (-1)^{a-1}(a-1)!  \frac{\epsilon^a e^{(a-1)\epsilon}}{(e^\epsilon-1)^{a}} +o(1)\\
    & = & (-1)^{a-1}(a-1)! +o(1),
\eean
and the conclusion follows noticing that the identity also holds for $a=1$.

Finally, for $a\ge1$, 
\bean
\epsilon^a\sum_{q\ge1}q^{a}e^{-\epsilon q} & = & (-1)^{a}\epsilon^a h^{(a)}(\epsilon).
\eean
Notice that for $a\ge2$, as $\e\to0$,
\bean
\epsilon^a h^{(a)}(\epsilon) & = & 
\epsilon^a\sum_{p=1}^{a} S(a,p) e^{p\epsilon} \frac{(-1)^p p!}{(e^\epsilon-1)^{p+1}} \\
    & = & (-1)^{a-1}(a-1)! S(a,a-1) \frac{\epsilon^a e^{(a-1)\epsilon}}{(e^\epsilon-1)^{a}} + (-1)^{a}a!  \frac{\epsilon^a e^{a\epsilon}}{(e^\epsilon-1)^{a+1}} + o(1)\\
    & = & (-1)^{a-1}(a-1)! S(a,a-1) + (-1)^{a}a!\left(\frac{1}{\epsilon} + \frac{a-1}{2}\right)+o(1).
\eean
Since 
\bean
(a-1)! S(a,a-1) = (a-1)! {a \choose 2}  = \frac{a-1}{2} a!
\eean
we deduce
\bean
\epsilon^a\sum_{q\ge1}q^{a}e^{-\epsilon q} & = & \frac{a!}{\epsilon} +o(1).
\eean
and the conclusion follows noticing that the identity also holds for $a=1$.
\end{proof}

\subsection{Incomplete integrals related to $D_{1,k}^\e$ and $D_{2,k}^\e $}

The following quantities will be involved in the computation of $D_{1,k}^\e$ and $D_{2,k}^\e $. Set
\bean
J_1^\e(k,p) & = & \int_\e^\infty x^{k}\frac{e^{px}}{(e^x-1)^{p+1}}dx \\
J_2^\e(k,p) & = & \int_\e^\infty x^{k}\frac{e^{px}}{(e^x-1)^{p+2}}dx,
\eean
and
\bean
Z_\e(k,j) & = & \sum_{q=1}^\infty q^j \int_\e^\infty e^{-qy}y^k dy.
\eean

\begin{lemma}
For all $k,p\ge0$,
\bean
J_2^\e(k,p) & = &
J_1^\e(k,p+1)-J_1^\e(k,p),
\eean
\end{lemma}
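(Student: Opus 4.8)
The plan is to establish the recurrence
\[
J_2^\e(k,p) = J_1^\e(k,p+1)-J_1^\e(k,p)
\]
by a purely algebraic manipulation of the integrands, with no integration by parts or limiting arguments needed. The key observation is the partial-fraction-type identity
\[
\frac{e^{px}}{(e^x-1)^{p+2}} = \frac{e^{px}\big(e^x-(e^x-1)\big)}{(e^x-1)^{p+2}} = \frac{e^{(p+1)x}}{(e^x-1)^{p+2}} - \frac{e^{px}}{(e^x-1)^{p+1}},
\]
where I have simply written $1 = e^x - (e^x-1)$ in the numerator and split the fraction. Multiplying through by $x^k$ and integrating over $[\e,\infty)$ then gives the claim term by term, since the right-hand side is exactly $x^k \frac{e^{(p+1)x}}{(e^x-1)^{(p+1)+1}} - x^k\frac{e^{px}}{(e^x-1)^{p+1}}$, whose integrals are $J_1^\e(k,p+1)$ and $J_1^\e(k,p)$ respectively.

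The only point requiring a word of justification is that each of the three integrals converges, so that the identity of integrands passes to an identity of integrals (and in particular no indeterminate $\infty-\infty$ arises). Near $x=\e>0$ the integrands are continuous, so there is no issue there; as $x\to\infty$ one has $\frac{e^{px}}{(e^x-1)^{p+1}} = O(e^{-x})$ and $\frac{e^{px}}{(e^x-1)^{p+2}} = O(e^{-2x})$, so $x^k$ times either quantity is integrable at infinity. Hence all of $J_1^\e(k,p)$, $J_1^\e(k,p+1)$, $J_2^\e(k,p)$ are finite, and linearity of the integral finishes the proof.

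I do not anticipate a genuine obstacle here: this lemma is a bookkeeping step whose role is to feed a telescoping/recursive scheme for evaluating $D_{1,k}^\e$ and $D_{2,k}^\e$ later. The "hard part," such as it is, is merely to state the convergence remark cleanly so the algebraic split is rigorous; everything else is the one-line numerator trick $1=e^x-(e^x-1)$.
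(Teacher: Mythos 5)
Your proof is correct and is essentially identical to the paper's: the authors use the same one-line numerator trick (written as $e^{px} = -e^{px}(e^x-1-e^x)$) and split the fraction, then integrate term by term. The added convergence remark is a harmless extra that the paper omits.
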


\begin{proof}
A simple manipulation gives
\bean
J_2^\e(k,p) = \int_\e^\infty x^{k}\frac{e^{px}}{(e^x-1)^{p+2}}dx 
     & = & -\int_\e^\infty x^{k}\frac{e^{px}(e^x-1-e^x)}{(e^x-1)^{p+2}}dx \\
     & = & -\int_\e^\infty x^{k}\frac{e^{px}}{(e^x-1)^{p+1}}dx + \int_\e^\infty x^{k}\frac{e^{(p+1)x}}{(e^x-1)^{p+2}}dx,
\eean
as claimed.
\end{proof}

\begin{lemma}
For all $k,p\ge0$,
\bean
J_1^\e(k,p) =  \int_\e^\infty x^{k}\frac{e^{px}}{(e^x-1)^{p+1}}dx & = &
\frac{1}{p!} \sum_{j=0}^{p} (-1)^{p+j} s(p,j) Z_\e(k,j),
\eean
where 
\bean
Z_\e(k,j) & = & k! \sum_{a=0}^k \frac{\e ^a}{a!}\sum_{q=1}^\infty \frac{e^{-\e q}}{q^{k-j+1-a}}.
\eean
If $j<k$, i.e. $k-j+1\ge2$, then
\bean
Z_\e(k,j) & = & k! \ \zeta(k-j+1) +o(1).
\eean
\end{lemma}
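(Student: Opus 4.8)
The plan is to compute $J_1^\e(k,p)$ by expanding the rational function $\frac{e^{px}}{(e^x-1)^{p+1}}$ as a series in $e^{-x}$ and integrating term by term, then use the change of basis between the falling factorial basis and the monomial basis (governed by Stirling numbers of the first kind) to organize the resulting double sum. Concretely, first I would write $\frac{e^{px}}{(e^x-1)^{p+1}} = \frac{e^{-x}}{(1-e^{-x})^{p+1}} = \sum_{q\ge1}\binom{q-1+p}{p}e^{-qx}$ (shifting the standard negative-binomial expansion), so that $\binom{q+p-1}{p} = \frac{(q+p-1)(q+p-2)\cdots q}{p!} = \frac{1}{p!}(q+p-1)_p$, a polynomial in $q$ of degree $p$. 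Expanding $(q+p-1)_p$ in powers of $q$ introduces Stirling numbers of the first kind; after tracking signs (note $(q+p-1)_p = (-1)^p(-q)_p$ type manipulations, or directly $(q+p-1)(q+p-2)\cdots q$) one arrives at $\binom{q+p-1}{p} = \frac{1}{p!}\sum_{j=0}^{p}(-1)^{p+j}s(p,j)q^j$, which is the coefficient pattern claimed. Then $J_1^\e(k,p) = \sum_{q\ge1}\binom{q+p-1}{p}\int_\e^\infty x^k e^{-qx}\,dx = \frac{1}{p!}\sum_{j=0}^p(-1)^{p+j}s(p,j)\sum_{q\ge1}q^j\int_\e^\infty x^k e^{-qx}\,dx = \frac{1}{p!}\sum_{j=0}^p(-1)^{p+j}s(p,j)Z_\e(k,j)$, which is the first displayed identity.

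Next I would evaluate $Z_\e(k,j) = \sum_{q\ge1}q^j\int_\e^\infty x^k e^{-qx}\,dx$. The inner integral is an incomplete Gamma integral: substituting $y=qx$ gives $\int_\e^\infty x^k e^{-qx}\,dx = q^{-k-1}\int_{\e q}^\infty y^k e^{-y}\,dy = q^{-k-1}\Gamma(k+1,\e q)$, and expanding the incomplete Gamma function via $\Gamma(k+1,u) = k!\,e^{-u}\sum_{a=0}^k \frac{u^a}{a!}$ yields $\int_\e^\infty x^k e^{-qx}\,dx = k!\,e^{-\e q}\sum_{a=0}^k \frac{\e^a}{a!}q^{a-k-1}$. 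Summing over $q$ then gives $Z_\e(k,j) = k!\sum_{a=0}^k \frac{\e^a}{a!}\sum_{q\ge1}\frac{e^{-\e q}}{q^{k-j+1-a}}$, exactly the second displayed formula. (The interchange of the sums over $q$ and $a$ is legitimate since the $a$-sum is finite; the interchange with the original $q$-sum defining $J_1^\e$ is justified by absolute convergence for fixed $\e>0$, since all terms are positive.)

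Finally, for the asymptotic statement, suppose $j<k$, i.e. $k-j+1\ge 2$. In the double sum for $Z_\e(k,j)$, the $a=0$ term is $k!\sum_{q\ge1}\frac{e^{-\e q}}{q^{k-j+1}}$, which converges to $k!\,\zeta(k-j+1)$ as $\e\to0$ by dominated convergence (the exponent $k-j+1\ge2$ ensures the limiting series converges). For $a\ge1$, the term is $\frac{\e^a}{a!}\cdot k!\sum_{q\ge1}\frac{e^{-\e q}}{q^{k-j+1-a}}$; here the exponent $k-j+1-a$ can be $1$, $0$, or negative. When it is $\ge 1$ the tail sum is $O(1)$, so with the $\e^a$ prefactor ($a\ge1$) the contribution is $o(1)$. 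When $k-j+1-a = 1$ exactly (so $a\ge1$), Lemma \ref{sumq} gives $\sum_{q\ge1}\frac{e^{-\e q}}{q} = -\log\e+o(1)$, and $\e^a\log\e = o(1)$. When $k-j+1-a\le 0$, write $b = a-(k-j+1)\ge 0$ and note $\e^a\sum_{q\ge1}q^b e^{-\e q} = \e^{k-j+1}\cdot(\e^b\sum_{q\ge1}q^b e^{-\e q})$; by Lemma \ref{sumq} the bracketed quantity is $O(1/\e)$ (or $O(1)$ if $b=0$), so the whole term is $O(\e^{k-j}) = o(1)$ since $k-j\ge1$. Collecting all cases, only the $a=0$ term survives in the limit and $Z_\e(k,j) = k!\,\zeta(k-j+1)+o(1)$. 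The main obstacle here is bookkeeping: one must keep careful track of which range of $a$ produces which order of vanishing, and make sure the $a=0$ term's convergence to $\zeta(k-j+1)$ genuinely needs the hypothesis $j<k$ (for $j=k$ the exponent is $1$ and the series diverges, producing the $-\log\e$ behaviour that will be handled separately elsewhere).
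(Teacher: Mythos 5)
Your proof is correct and follows essentially the same route as the paper's: the negative-binomial expansion producing the coefficients $\binom{q+p-1}{p}$, the conversion of the rising factorial $q(q+1)\cdots(q+p-1)$ into signed Stirling numbers of the first kind, and the finite incomplete-Gamma expansion valid for integer $k$. The only differences are cosmetic --- you expand directly in powers of $e^{-x}$ where the paper first substitutes $u=e^x$ and then $u\mapsto 1/x$ before expanding $(1-x)^{-(p+1)}$ and substituting back, and your case-by-case verification that the $a\ge1$ terms are $o(1)$ is more explicit than the paper's one-line appeal to dominated convergence.
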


\begin{proof}
Using changes of variables, we obtain
\bean
J_1^\e(k,p) & = & \int_\e^\infty x^{k}\frac{e^{px}}{(e^x-1)^{p+1}}dx \qquad (u=e^x,\ x=\log u)\\
    & = & \int_{e^\e}^\infty \frac{u^{p-1} \log^k u}{(u-1)^{p+1}}du \qquad (u=1/x)\\
    & = &  (-1)^{k}\int_0^{e^{-\e}} \frac{\log^k x}{(1-x)^{p+1}} dx.  
\eean
We have the expansion
\bean
\frac{1}{(1-x)^{p+1}} & = & \frac{1}{p!} \sum_{q=1}^\infty \frac{(q+p-1)!}{(q-1)!} x^{q-1}.
\eean
On the other hand,
\bean
\frac{(q+p-1)!}{(q-1)!} & = & q(q+1)\cdots (q+p-1) \\
    & = & (-1)^p (-q)(-q-1) \cdots (-q-(p-1))\\
    & = & (-1)^p \sum_{j=0}^{p} s(p,j) (-1)^{j} q^j,
\eean
where we recall that $s(p,j)$ is a (signed) Stirling number of the first kind. Using the Fubini--Tonelli theorem and the change of variables $y=-\log(x)$, we obtain
\bean
J_1^\e(k,p) & = & \frac{(-1)^{k}}{p!} \sum_{j=0}^{p} (-1)^{p+j} s(p,j) \sum_{q=1}^\infty q^j \int_0^{e^{-\e}} x^{q-1} \log^k x\  dx \\
     & = & \frac{1}{p!} \sum_{j=0}^{p} (-1)^{p+j} s(p,j) \sum_{q=1}^\infty q^j \int_\e^\infty e^{-qy}y^k dy.
\eean
Let us now set and compute, using the particular form of the incomplete Gamma function $\int_{\e q}^\infty e^{-x}x^k dx$ when $k$ is an integer:
\bean
Z_\e(k,j) & = & \sum_{q=1}^\infty q^j \int_\e^\infty e^{-qy}y^k dy \\
    & = & \sum_{q=1}^\infty q^j \frac{1}{q^{k+1}}\int_{\e q}^\infty e^{-x}x^k dx \\
    %& = & \sum_{q=1}^\infty \frac{1}{q^{k-j+1}} \Gamma(k+1,\e q) \\
     & = & k! \sum_{q=1}^\infty \frac{e^{-\e q}}{q^{k-j+1}} \sum_{a=0}^k \frac{(\e q)^a}{a!}.
\eean
Finally notice that if $j<k$, the dominated convergence theorem yields
\bean
Z_\e(k,j) & = & k! \ \zeta(k-j+1) +o(1),
\eean
and the conclusion follows.
\end{proof}

%\subsection{About the Z's}

\begin{lemma}\label{Zkk}
The following expansions hold:
\bea
Z_\e(k,k) & = & -k!\log(\epsilon) + k!\ H_k + o(1), \qquad k\ge1; \\
Z_\e(k,k+1) & = & \frac{(k+1)!}{\epsilon}-\frac{k!}{2} + o(1) \qquad k\ge0. \label{Zkk+1}
\eea
\end{lemma}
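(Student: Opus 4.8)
The plan is to feed the closed form for $Z_\e(k,j)$ obtained in the previous lemma,
\[
Z_\e(k,j) \;=\; k! \sum_{a=0}^k \frac{\e^a}{a!}\sum_{q=1}^\infty \frac{e^{-\e q}}{q^{k-j+1-a}},
\]
into the two cases $j=k$ and $j=k+1$. For $j=k$ the exponent $k-j+1-a$ equals $1-a$, so the inner sum is $\sum_{q\ge1}q^{a-1}e^{-\e q}$; for $j=k+1$ it equals $-a$, so the inner sum is $\sum_{q\ge1}q^{a}e^{-\e q}$. These are exactly the sums whose $\e\to0$ asymptotics are recorded in Lemma~\ref{sumq}, once the borderline term $a=0$ is treated by hand.

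For $Z_\e(k,k)$: the $a=0$ summand is $k!\sum_{q\ge1}e^{-\e q}/q=-k!\log\e+o(1)$ by the first identity of Lemma~\ref{sumq}, and for $1\le a\le k$ the $a$-th summand is
\[
\frac{k!}{a!}\,\e^a\sum_{q\ge1}q^{a-1}e^{-\e q} \;=\; \frac{k!}{a!}\,(a-1)!+o(1)\;=\;\frac{k!}{a}+o(1)
\]
by the second identity of Lemma~\ref{sumq}. Summing the $k+1$ summands gives $-k!\log\e+k!\sum_{a=1}^k\frac1a+o(1)=-k!\log\e+k!H_k+o(1)$.

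For $Z_\e(k,k+1)$: the $a=0$ summand is $k!\sum_{q\ge1}e^{-\e q}=k!\,(e^\e-1)^{-1}$, which the $a=0$ instance of Lemma~\ref{e-diff-h} (equivalently the elementary expansion $(e^\e-1)^{-1}=\e^{-1}-\tfrac12+o(1)$) turns into $\frac{k!}{\e}-\frac{k!}{2}+o(1)$; for $1\le a\le k$ the $a$-th summand is $\frac{k!}{a!}\,\e^a\sum_{q\ge1}q^{a}e^{-\e q}=\frac{k!}{a!}\cdot\frac{a!}{\e}+o(1)=\frac{k!}{\e}+o(1)$ by the third identity of Lemma~\ref{sumq}. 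Collecting the $k+1$ contributions to the $1/\e$ term yields $\frac{(k+1)k!}{\e}-\frac{k!}{2}+o(1)=\frac{(k+1)!}{\e}-\frac{k!}{2}+o(1)$.

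There is essentially no obstacle: the only point worth a word is that for fixed $k$ each $Z_\e(k,j)$ is a sum of a fixed number ($k+1$) of terms, so the finitely many $o(1)$ errors aggregate to an $o(1)$, and no uniformity in $k$ is needed or claimed.
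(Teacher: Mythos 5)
Your proof is correct and follows essentially the same route as the paper: substitute the closed form of $Z_\e(k,j)$, split off the $a=0$ term (handled via $-\log(1-e^{-\e})$ resp.\ $(e^\e-1)^{-1}$), and apply the asymptotics of Lemma~\ref{sumq} to the finitely many remaining terms. The only cosmetic difference is that you invoke Lemma~\ref{e-diff-h} at $a=0$ where the paper expands $(e^\e-1)^{-1}$ directly, which is the same computation.
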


\begin{proof}
First, we compute
\bean
Z_\e(k,k) & = &  k! \sum_{a=0}^k \frac{\epsilon^a}{a!}  \sum_{q\ge1}q^{a-1}e^{-\epsilon q}\\
    & = & k!\left(\sum_{q\ge1}q^{-1}e^{-\epsilon q}+\sum_{a=1}^k \frac{1}{a!} \epsilon^a \sum_{q\ge1}q^{a-1}e^{-\epsilon q}\right)\\
    & = & k!\left(-\log(\epsilon)+\sum_{a=1}^k \frac{1}{a} \right) + o(1).
\eean
Second,
\bean
Z_\e(k,k+1) & = &  k! \sum_{a=0}^k \frac{\epsilon^a}{a!}  \sum_{q\ge1}q^{a}e^{-\epsilon q}\\
    & = & k!\left(\sum_{q\ge1} e^{-\epsilon q}+\sum_{a=1}^k \frac{1}{a!} \epsilon^a \sum_{q\ge1}q^{a}e^{-\epsilon q}\right)\\
    & = & k!\left(\frac{1}{e^\epsilon-1}+\sum_{a=1}^k \frac{1}{a!}\frac{a!}{\epsilon} \right) + o(1)\\
    & = & k!\left(\frac{1}{\epsilon}-\frac{1}{2}+ \frac{k}{\epsilon} \right) + o(1),
\eean
as claimed. 
\end{proof}

\subsection{Asymptotic expansion of $D_{1,k}^\e$, $D_{2,k}^\e$, and conclusion}

\begin{lemma}
For all $k\ge1$,
\bean
D_{1,k}^\e   & = & (-1)^k \left(\frac{k!}{\epsilon}-\frac{(k-1)!}{2}\right) + o(1). 
\eean
\end{lemma}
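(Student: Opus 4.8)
The plan is to feed the closed form for $h^{(k)}$ from \eqref{h(k)} into the definition $D_{1,k}^\e=\int_\e^\infty x^{k-1}h^{(k)}(x)\,dx$, which turns it into a linear combination of the incomplete integrals $J_1^\e(k-1,p)$. First I would write
\[
D_{1,k}^\e=\sum_{p=1}^{k}\alpha_{k,p}\,J_1^\e(k-1,p),
\]
and then apply the Stirling-number expansion of $J_1^\e(k-1,p)$ established above together with $\alpha_{k,p}=(-1)^pS(k,p)p!$. The factorial cancels the $1/p!$ and $(-1)^p(-1)^{p+j}=(-1)^j$, so that
\[
D_{1,k}^\e=\sum_{p=1}^{k}\sum_{j=0}^{p}(-1)^{j}S(k,p)s(p,j)\,Z_\e(k-1,j).
\]

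Next I would exchange the order of summation. The terms with $j=0$ drop out because $s(p,0)=0$ for $p\ge1$, and for $j\ge1$ the inner sum over $p$ is precisely the Stirling orthogonality relation $\sum_{p=j}^{k}S(k,p)s(p,j)=\delta_{j,k}$ recalled in Section~\ref{stirling}. Hence every contribution with $j<k$ vanishes \emph{identically} (not merely up to $o(1)$), and the double sum collapses to the single surviving term
\[
D_{1,k}^\e=(-1)^{k}Z_\e(k-1,k).
\]
Finally I would invoke Lemma~\ref{Zkk}, namely \eqref{Zkk+1} applied with $k$ replaced by $k-1$ (legitimate since $k-1\ge0$), giving $Z_\e(k-1,k)=\frac{k!}{\epsilon}-\frac{(k-1)!}{2}+o(1)$; multiplying by $(-1)^k$ yields the claim.

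I do not expect a genuine obstacle here. The only delicate points are the bookkeeping in the interchange of sums — isolating the $j=0$ term and keeping the summation ranges straight so that the orthogonality identity applies with the correct endpoints — and recording that, since the coefficients attached to $Z_\e(k-1,j)$ for $j<k$ are \emph{exactly} zero, one never has to control the more singular asymptotics (in $\log\epsilon$ and constants) of those quantities. As a consistency check, for $k=1$ the statement reads $D_{1,1}^\e=-\tfrac1\epsilon+\tfrac12+o(1)$, which agrees with the direct evaluation $\int_\e^\infty h'(x)\,dx=-h(\e)=-1/(e^\e-1)=-\tfrac1\epsilon+\tfrac12+o(1)$.
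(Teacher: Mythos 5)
Your proposal is correct and follows essentially the same route as the paper: expand $h^{(k)}$ via \eqref{h(k)}, reduce to the $J_1^\e(k-1,p)$, insert their Stirling-number expansion, collapse the double sum by the orthogonality relation $\sum_{p=j}^{k}S(k,p)s(p,j)=\delta_{j,k}$ to get $(-1)^kZ_\e(k-1,k)$, and conclude with \eqref{Zkk+1}. Your explicit handling of the $j=0$ term and the $k=1$ sanity check are welcome additions but do not change the argument.
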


\begin{proof}
We have for all $k\ge1$,
\bean
D_{1,k}^\e & = & \int_\e^\infty x^{k-1} h^{(k)}(x)dx \\
    & = & \int_\e^\infty x^{k-1}\sum_{p=1}^k \alpha_{k,p} \frac{e^{px}}{(e^x-1)^{p+1}} dx \\
    & = & \sum_{p=1}^k \alpha_{k,p} J_1^\e(k-1,p)\\
    & = & \sum_{p=1}^{k} (-1)^p S(k,p) \sum_{j=1}^{p} S_1(p,j) Z_\e(k-1,j)\\
    & = & \sum_{j=1}^{k} (-1)^{j}\sum_{p=j}^{k} S(k,p)S_1(p,j)\ Z_\e(k-1,j)\\
    & = & \sum_{j=1}^{k}(-1)^{j} \delta_{k,j} Z_\epsilon(k-1,j)\\
    & = & (-1)^{k}  Z_\epsilon(k-1,k),
\eean
and we use (\ref{Zkk+1}) replacing $k$ by $k-1$ to conclude.
% and we use (\ref{Akk+1}) replacing $k$ by $k-1$.
\end{proof}

\begin{lemma}
We have
\bean
D_{2,k}^\e & = & \sum_{j=1}^{k+1}(-1)^{j+1} K_{k+1,j} Z_\epsilon(k,j).
\eean
\end{lemma}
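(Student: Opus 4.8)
The plan is to mirror the computation of $D_{1,k}^\e$ from the previous lemma, but now starting from
\[
D_{2,k}^\e = \int_\e^\infty \frac{x^k h^{(k)}(x)}{e^x-1}\,dx.
\]
First I would substitute the formula $h^{(k)}(x)=\sum_{p=1}^k \alpha_{k,p}\,e^{px}/(e^x-1)^{p+1}$ from Lemma~\ref{h(k)}, which gives
\[
D_{2,k}^\e = \sum_{p=1}^k \alpha_{k,p}\int_\e^\infty x^k \frac{e^{px}}{(e^x-1)^{p+2}}\,dx
= \sum_{p=1}^k \alpha_{k,p}\, J_2^\e(k,p).
\]
Then I would apply the identity $J_2^\e(k,p)=J_1^\e(k,p+1)-J_1^\e(k,p)$ to reduce everything to the $J_1^\e$-integrals, whose Stirling-number expansion in terms of $Z_\e(k,j)$ is already available.

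The core of the argument is then an exchange-of-summation-order bookkeeping, exactly as in the $D_{1,k}^\e$ computation. Substituting $\alpha_{k,p}=(-1)^p S(k,p)\,p!$ and the expansion $J_1^\e(k,p)=\frac{1}{p!}\sum_{j} (-1)^{p+j} s(p,j) Z_\e(k,j)$, one gets after the telescoping a double sum over $p$ and $j$ of terms $S(k,p)\,s(p+1,j)$ and $S(k,p)\,s(p,j)$ times signs and $Z_\e(k,j)$. Collecting the coefficient of each $Z_\e(k,j)$, I expect the inner sum over $p$ to collapse via the two "Stirling inversion"-type identities recorded in Section~\ref{stirling}: the orthogonality relation $\sum_{p=j}^k S(k,p)s(p,j)=\delta_{k,j}$, and the weighted identity $\sum_{p=j}^k \frac{S(k,p)s(p,j)}{p}=K_{k,j}$. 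The appearance of $K_{k+1,j}$ in the claimed answer strongly suggests that after reindexing $p\mapsto p-1$ in the $s(p+1,j)$ piece and combining with the $s(p,j)$ piece, the net coefficient of $Z_\e(k,j)$ becomes precisely $(-1)^{j+1}$ times $\sum_{p}\frac{S(k+1,p)s(p,j)}{p}=K_{k+1,j}$ (using the recursion $S(k+1,p)=S(k,p-1)+pS(k,p)$ to merge the two sums). So the main steps are: (1) plug in $h^{(k)}$; (2) telescope $J_2$ into $J_1$'s; (3) expand $J_1$ via first-kind Stirling numbers; (4) swap the order of summation to isolate $Z_\e(k,j)$; (5) recognize the resulting Stirling combination as $K_{k+1,j}$ via the identity $(6.100)$-type relation and the Stirling recursions.

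The main obstacle will be getting the index ranges and signs right in step (4)–(5): the telescoping shifts one copy of the $p$-sum, the definition $K_{k+1,j}=\frac1{k+1}\binom{k+1}{j}B_{k+1-j}+\delta_{j,k}$ has to emerge from a sum of the form $\sum_p S(k+1,p)s(p,j)/p$, and one must track carefully whether the $j=k+1$ and $j=1$ boundary terms are produced correctly (in particular the $Z_\e(k,k+1)$ term, whose $1/\e$ divergence must match what $D_{3,k}^\e - D_{1,k}^\e$ leaves uncancelled). I would keep the sum in the unevaluated form $D_{2,k}^\e=\sum_{j=1}^{k+1}(-1)^{j+1}K_{k+1,j}Z_\e(k,j)$ as the lemma's statement, deferring the substitution of the explicit asymptotics of $Z_\e(k,j)$ (from Lemmas on $Z_\e$) to the concluding subsection where $D_{1,k}^\e$, $D_{2,k}^\e$, $D_{3,k}^\e$ are combined.
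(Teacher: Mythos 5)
Your proposal is correct and follows essentially the same route as the paper: substitute the formula for $h^{(k)}$, telescope $J_2^\e(k,p)=J_1^\e(k,p+1)-J_1^\e(k,p)$, expand $J_1^\e$ via first-kind Stirling numbers, and identify the coefficient of $Z_\e(k,j)$ as $(-1)^{j+1}K_{k+1,j}$ through the recursion $S(k+1,p)=S(k,p-1)+pS(k,p)$ together with the identity $\sum_p S(k+1,p)s(p,j)/p=K_{k+1,j}$. The only cosmetic difference is that the paper performs the Abel-summation step on the coefficients $\alpha_{k,p}$ (via $\alpha_{k,p-1}-\alpha_{k,p}=-\alpha_{k+1,p}/p$) \emph{before} expanding $J_1^\e$ in terms of $Z_\e(k,j)$, whereas you expand first and merge afterwards; the two orderings are equivalent.
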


\begin{proof}
We have
\bean
D_{2,k}^\e & = & \int_\e^\infty \frac{x^k h^{(k)}(x)}{e^{x}-1}dx \\
    & = & \int_\e^\infty \frac{x^k}{e^{x}-1}\sum_{p=1}^k \alpha_{k,p} \frac{e^{px}}{(e^x-1)^{p+1}} dx \\
    & = & \sum_{p=1}^k \alpha_{k,p} J_2^\e(k,p)\\
    & = & \sum_{p=1}^k \alpha_{k,p}\left(J_1^\e(k,p+1)-J_1^\e(k,p)\right)\\
    & = & \sum_{p=2}^{k+1} \alpha_{k,p-1}J_1^\e(k,p) - \sum_{p=1}^k \alpha_{k,p}J_1^\e(k,p) \\
    & = & \alpha_{k,k}J_1^\e(k,k+1)-\alpha_{k,1}J_1^\e(k,1) + \sum_{p=2}^{k} (\alpha_{k,p-1}-\alpha_{k,p})J_1^\e(k,p)\\
    & = & \alpha_{k,k}J_1^\e(k,k+1)-\alpha_{k,1}J_1^\e(k,1) - \sum_{p=2}^{k} \frac{\alpha_{k+1,p}}{p} J_1^\e(k,p) \\
    & = & - \sum_{p=1}^{k+1} \frac{\alpha_{k+1,p}}{p} J_1^\e(k,p), 
\eean
since $\d \frac{\alpha_{k+1,k+1}}{k+1}=(-1)^{k+1}k!=-\alpha_{k,k}$ and $\alpha_{k+1,1}=\alpha_{k,1}$.
Moreover
\bean
\sum_{p=1}^{k+1} \frac{\alpha_{k+1,p}}{p} J_1^\e(k,p)
    & = & \sum_{p=1}^{k+1} \frac{(-1)^p}{p}S(k+1,p) \sum_{j=1}^{p} (-1)^{p-j}S_1(p,j) Z_\e(k,j)\\
    & = & \sum_{j=1}^{k+1} \sum_{p=j}^{k+1}\frac{(-1)^j}{p}S(k+1,p)S_1(p,j)\ Z_\e(k,j)\\
    & = & \sum_{j=1}^{k+1}(-1)^{j} K_{k+1,j} Z_\epsilon(k,j),
\eean
which concludes the proof.
% and we use (\ref{Akk+1}) replacing $k$ by $k-1$.
\end{proof}

For $k\ge2$, $D_{2,k}^\e$ contains convergent terms involving $\zeta$ and  a divergent term $R_{2,k}^\e$, which reads
\bean
R_{2,k}^\e & = & (-1)^{k+1} K_{k+1,k} Z_\epsilon(k,k)+ (-1)^{k} K_{k+1,k+1} Z_\epsilon(k,k+1) 
\eean
But 
\bean
K_{k+1,k} & = & \frac{1}{k+1} \binom{k+1}{k} B_{1} +1 \ =\ 1/2 \\
K_{k+1,k+1} & = &\frac{1}{k+1} \binom{k+1}{k+1} B_{0} \ =\  \frac{1}{k+1}.
\eean
Therefore, by Lemma \ref{Zkk},
\bean
R_{2,k}^\e & = & (-1)^{k+1} \frac{-k!\log(\epsilon) + k!\ H_k}{2} + (-1)^{k} \frac{1}{k+1}\left(\frac{(k+1)!}{\epsilon}-\frac{k!}{2}\right) + o(1). 
\eean

\begin{lemma}
We have
\bean
D_{3,k}^\e & = & (-1)^{k+1} k! \frac{\log \epsilon}{2} + (-1)^k k! C + o(1).
\eean
\end{lemma}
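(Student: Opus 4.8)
The plan is to observe that $D_{3,k}^\e$ is nothing but an $\epsilon$-independent constant multiple of the integral $I(\epsilon)$ whose asymptotics were already established. Directly from the definition,
\[
D_{3,k}^\e \;=\; (-1)^k k!\int_\e^\infty \frac{1}{x}\left(\frac{1}{x}-\frac{1}{e^{x}-1}\right)dx \;=\; (-1)^k k!\, I(\epsilon).
\]

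First I would invoke the expansion $I(\epsilon) = -\tfrac{\log\epsilon}{2} + C + o(1)$ as $\epsilon\to 0$, proved in the lemma immediately following Lemma~\ref{e-diff-h} (which rests on Balazard's identification of the constant $C$). Since the prefactor $(-1)^k k!$ does not depend on $\epsilon$, multiplying the expansion by it sends an $o(1)$ term to an $o(1)$ term, so that
\[
D_{3,k}^\e \;=\; (-1)^k k!\left(-\frac{\log\epsilon}{2} + C + o(1)\right) \;=\; (-1)^{k+1} k!\,\frac{\log\epsilon}{2} + (-1)^k k!\, C + o(1),
\]
using $(-1)^k\cdot(-1) = (-1)^{k+1}$. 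This is exactly the asserted identity.

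There is no real obstacle here: the entire content of the statement is carried by the previously established asymptotics of $I(\epsilon)$, and the only thing to verify is the sign bookkeeping. The reason this is recorded as a separate lemma is purely organizational --- $D_{3,k}^\e$ is the third of the three pieces $D_{1,k}^\e$, $D_{2,k}^\e$, $D_{3,k}^\e$ whose asymptotic expansions are combined in the final step of the elementary proof of Theorem~\ref{second} (note in particular that the $\log\epsilon$ and $1/\epsilon$ divergences coming from $D_{1,k}^\e$ and $R_{2,k}^\e$ must cancel against the $\log\epsilon$ term here), and isolating its expansion keeps that assembly transparent.
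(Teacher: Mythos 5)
Your proof is correct and is exactly the intended argument: the paper leaves this lemma without an explicit proof precisely because $D_{3,k}^\e = (-1)^k k!\, I(\epsilon)$ by definition, and the expansion $I(\epsilon) = -\tfrac{\log\epsilon}{2} + C + o(1)$ was already established in the lemma following Lemma~\ref{e-diff-h}. Your sign bookkeeping and the remark about the role of this term in the final cancellation are both accurate.
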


We can now complete the proof of Theorem \ref{second} using 
\bean
A^{(k)}(1) & = & D_{3,k}^\e + D_{2,k}^\e -D_{1,k}^\e.
\eean
Therefore for all $k\ge1$, with the convention $\sum_{j=1}^{0}=0$,
\bean
A^{(k)}(1) & = & (-1)^k k! \left(C - \frac{H_{k+1}}{2} +\frac{1}{2k} - \sum_{j=1}^{k-1}(-1)^{j+k} K_{k+1,j} \zeta(k-j+1) \right).
\eean
Using for all $j\le k-1$
\bea
K_{k+1,j} & = & \frac{1}{k+1} \binom{k+1}{j} B_{k+1-j},
\eea
symmetry and "pion" formula for binomial coefficients, and $(-1)^jB_j=B_j$, $j\ge2$, we can write:
\bean
\sum_{j=1}^{k-1}(-1)^{j+k} K_{k+1,j} \zeta(k-j+1)  & = & \sum_{j=1}^{k-1}(-1)^{j+k} \frac{1}{k+1} \binom{k+1}{j} B_{k+1-j} \zeta(k-j+1)  \\
     & = & -\sum_{j=2}^{k}(-1)^{j} \frac{1}{k+1} \binom{k+1}{j} B_{j} \zeta(j) \\
     & = & -\sum_{j=2}^{k} \frac{1}{j} \binom{k}{j-1} B_{j} \zeta(j) 
\eean
Hence, for all $k\ge1$, with the convention $H_0=0$,
\bean
A^{(k)}(1)
    % & = & (-1)^k k! \left(C +\frac{1}{2k} - \frac{H_{k+1}}{2}  + \sum_{j=2}^{k} {k \choose j-1} \frac{B_{j}\zeta(j)}{j} \right) \\
     & = & (-1)^k k! \left(C -\frac{1}{2(k+1)} - \frac{H_{k-1}}{2}  + \sum_{j=2}^{k} {k \choose j-1} \frac{\zeta(j)B_j}{j} \right).
\eean

To obtain the formula for $k=0$, let us finally compute $A(1)$.
\begin{lemma}\label{}
We have
\bean
A(1) & = & 2C -\frac{1}{2}.
\eean
\end{lemma}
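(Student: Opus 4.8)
The plan is to reuse, at the level of $A$ itself, the same cutoff-and-cancel mechanism already employed for $D_{1,k}^\e$, $D_{2,k}^\e$, $D_{3,k}^\e$. Since $A(v)=\int_0^\infty\big(\tfrac1{xv}-\tfrac1{e^{xv}-1}\big)\big(\tfrac1x-\tfrac1{e^x-1}\big)\,dx$, specializing to $v=1$ gives
\[
A(1)=\int_0^\infty\left(\frac1x-\frac1{e^x-1}\right)^2dx=\lim_{\epsilon\to0}\int_\epsilon^\infty\left(\frac1x-\frac1{e^x-1}\right)^2dx,
\]
the limit being legitimate because the integrand is continuous on $[0,\infty)$ (it tends to $1/4$ as $x\to0$) and exponentially small at infinity.

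First I would write the squared integrand as $\tfrac1x\big(\tfrac1x-\tfrac1{e^x-1}\big)-\tfrac1{e^x-1}\big(\tfrac1x-\tfrac1{e^x-1}\big)$ and integrate each piece on $[\epsilon,\infty)$. The first piece is exactly $I(\epsilon)=-\tfrac{\log\epsilon}{2}+C+o(1)$ by the asymptotic lemma already proved. The second piece I would split further as $\int_\epsilon^\infty\tfrac{dx}{x(e^x-1)}-\int_\epsilon^\infty\tfrac{dx}{(e^x-1)^2}$ and plug in the two expansions from the same lemma, namely $\int_\epsilon^\infty\tfrac{dx}{x(e^x-1)}=\tfrac1\epsilon+\tfrac{\log\epsilon}{2}-C+o(1)$ and $\int_\epsilon^\infty\tfrac{dx}{(e^x-1)^2}=\log\epsilon+\tfrac1\epsilon-\tfrac12+o(1)$. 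This makes the second piece equal to $-\tfrac{\log\epsilon}{2}-C+\tfrac12+o(1)$.

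Finally I would subtract the two contributions:
\[
A(1)=\Big(-\tfrac{\log\epsilon}{2}+C\Big)-\Big(-\tfrac{\log\epsilon}{2}-C+\tfrac12\Big)+o(1)=2C-\tfrac12+o(1),
\]
and let $\epsilon\to0$. The only point to watch is that the $1/\epsilon$ and $\log\epsilon$ divergences cancel in pairs, which they do by construction of the splitting; apart from this bookkeeping there is no genuine obstacle, and no estimate beyond those already established in Section~\ref{sec:elementary} is required.
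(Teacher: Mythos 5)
Your proposal is correct and follows essentially the same route as the paper: the paper likewise writes $A(1)$ as $I(\epsilon)-\int_\epsilon^\infty\frac{dx}{x(e^x-1)}+\int_\epsilon^\infty\frac{dx}{(e^x-1)^2}+o(1)$ and substitutes the three asymptotic expansions from the same lemma, with the $1/\epsilon$ and $\log\epsilon$ terms cancelling to give $2C-\frac12$. Your bookkeeping (including the value $1/4$ of the integrand at $0$, which justifies the cutoff) checks out.
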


\begin{proof}
We expand
\bean
A(1) & = & \int_\epsilon^\infty \left(\frac{1}{x}-\frac{1}{e^{x}-1}\right)\left(\frac{1}{x}-\frac{1}{e^{x}-1}\right)dx +o(1) \\
    & = & I(\epsilon) - \int_\epsilon^\infty \frac{x^{-1}dx}{e^x-1} +  \int_\epsilon^\infty \frac{dx}{(e^x-1)^2} +o(1)\\
    & = & -\frac{\log \epsilon}{2} + C  -\frac{1}{\epsilon} - \frac{\log \epsilon}{2} + C + \log(\epsilon) + \frac{1}{\epsilon} - \frac{1}{2} +o(1) \\
    & = & 2C -\frac{1}{2} +o(1),
\eean
and take the limit.
\end{proof}

\bigskip

\section{Numerical results}\label{sec:numerical}

\subsection{Expression of the first $A^{(k)}(1)$}

Recall that 
\bean
A(v) & = & \int_0^\infty \left(\frac{1}{xv}-\frac{1}{e^{xv}-1}\right)\left(\frac{1}{x}-\frac{1}{e^{x}-1}\right)dx, \qquad v>0,
\eean
and
\bean
C=\frac{\log(2\pi)-\gamma}{2}=0.6303307\cdots
\eean
We have $\d A(1)  =  2C -\frac{1}{2}$,
%\bean
%A(1)  =  2C -\frac{1}{2} \ , \  A'(1) =  -C +\frac{1}{4} \ , \ A''(1)  =  2C-\frac{4}{3} + \frac{1}{3}\zeta(2).
%\eean
\bean
A'(1) = -\left(C-\frac{H_2}{2}+\frac{1}{2}\right) = -C+\frac{1}{4},
\eean
and
\bean
A''(1) & = & 2\left(C-\frac{H_3}{2}+\frac{1}{4} + {2 \choose 1} \frac{B_{2}\zeta(2)}{2}\right)\\
%    & = & 2C-H_3+\frac{1}{2} + 2\times\frac{1}{6}\zeta(2).
    & = & 2C-\frac{4}{3} + \frac{1}{3}\zeta(2).
\eean

To check these values, we directly differentiate $A$ and evaluate at $1$, for instance:
\bean
A'(1) & = & \int_0^\infty \left(-\frac{1}{x}+\frac{xe^x}{(e^{x}-1)^2}\right)\left(\frac{1}{x}-\frac{1}{e^{x}-1}\right)dx.
\eean

\subsection{Expression of the two first moments}

%\subsubsection{Verification for $N=1$}

The formula for $N=1$ reads 
\bean
\frac{-2}{\pi}\int_{-\infty}^\infty t^{2}  \left|\Gamma\zeta\left(\frac{1}{2}+it\right)\right|^2 dt  & = & \log(2\pi)-\gamma - 4 + \left(\frac{4}{2}-1\right) B_{4}+ T_{2,2}\frac{\zeta(2)B_{2}}{2} \\
%    & = & \log(2\pi)-\gamma - 4 + \frac{1}{6} + \frac{16}{12}\zeta(2) \\
    & = & \log(2\pi)-\gamma - \frac{23}{6} + \frac{4}{3}\zeta(2),
\eean
since 
\bean
T_{2,2} & = &  \binom{2}{2} 2^2 \left[ S(3,2) + S(2,1)\right] \\
    & = & 4 (3+1)=16.
\eean

%\subsubsection{Verification for $N=2$}

The formula for $N=2$ reads 
\bean
\frac{(-4)^2}{2\pi}\int_{-\infty}^\infty t^{4}  \left|\Gamma\zeta\left(\frac{1}{2}+it\right)\right|^2 dt  & = & \log(2\pi)-\gamma - 8 + \left(\frac{4^2}{2}-1\right) B_{4}+ \sum_{j=2}^{4} T_{4,j}\frac{\zeta(j)B_{j}}{j},
\eean
where 
\bean
T_{4,j} & = & (j-1)!\sum_{n=2}^{4} \binom{4}{n} 2^{n} \left[(-1)^n S(n+1,j) + (-1)^j S(n,j-1)\right].
\eean
Thus, we have
\bean
T_{4,2}  & = & 24(3+1) +32(-7+1) +16(15+1) \\
    & = & 160, \\
T_{4,4}  & = & 6(0+0+16(10+6)) \\
    & = & 1536.
    \eean
Hence
\bean
\frac{8}{\pi}\int_{-\infty}^\infty t^{4}  \left|\Gamma\zeta\left(\frac{1}{2}+it\right)\right|^2 dt  & = & \log(2\pi)-\gamma - 8 - 7\cdot \frac{1}{30} + 160\frac{\zeta(2)/6}{2} - 1536\frac{\zeta(4)/30}{4} \\
    & = & \log(2\pi)-\gamma - \frac{247}{30} + \frac{40}{3}\zeta(2) - \frac{64}{5}\zeta(4).
\eean

\bigskip 

\subsection{Numerical values}

The following table gives the first values of the $T(\ell,j)$:

\begin{equation*}
\begin{array}{| c | ccccccc |}
\hline
 \ell \backslash j & 2 & 3 & 4 & 5 & 6 & 7 & 8\\
\hline  
2 & 16 & & & & & & \\ 3 & 0 & -144 & & & & &\\ 4 & 160 & 0 & 1536 & & & &\\ 5 & 0 & -5280 & 0 & -19200 & & &\\ 6 & 1456 & 0 & 145920 & 0 & 276480 &  & \\ 7 & 0 & -147504 & 0 & -3897600 & 0 & -4515840 & \\
8 & 13120 &	0 &	9225216	& 0	& 105799680	& 0	& 82575360 \\
\hline 
\end{array}
\end{equation*}

\bigskip 
%Wolfram gives:
%\bean
%\int_{-\infty}^\infty t^{4}  \left|\Gamma\zeta\left(\frac{1}{2}+it\right)\right|^2 dt  & = & 0.434343
%\eean

The last one gives the first moments $M^{\Gamma\zeta}_{k}$: 

\medskip 

\begin{center}
\begin{tabular}{|c|l|}
    \hline
    $k$ & $\qquad M^{\Gamma\zeta}_{k} \rule{0pt}{2.6ex} \rule{2.6ex}{0pt}$ \tabularnewline
    \hline
    0 \rule{0pt}{2.6ex}& $4.77937654\cdots$ \rule{0pt}{2.6ex} \\
    2 \rule{0pt}{2.6ex}& $0.59600176\cdots$ \rule{0pt}{2.6ex} \\
    4 \rule{0pt}{2.6ex}& $0.43434281\cdots $\rule{0pt}{2.6ex} \\
    6 \rule{0pt}{2.6ex}& $1.01613719\cdots $\rule{0pt}{2.6ex} \\
    8 \rule{0pt}{2.6ex}& $5.60532440\cdots$ \rule{0pt}{2.6ex} \\
    10 \rule{0pt}{2.6ex}& $57.6316873\cdots$ \rule{0pt}{2.6ex} \\
    12 \rule{0pt}{2.6ex}& $940.337401\cdots$ \rule{0pt}{2.6ex}
    \tabularnewline
    \hline
 \end{tabular}
\end{center}

\bigskip

\section*{Acknowledgement} 
The authors are very grateful to the anonymous referee for their report, comments and suggestions that improved the paper.

The authors thank Michel Balazard for noting the relation of Ramanujan's formulas with Mellin-Plancherel theorems and communicating the reference \cite{Kim16}. They thank Charles Bordenave for communicating \cite{Sim98}. They are also grateful to Ren\'e Adad, S\'er\'ena Pedon and G\'erald Tenenbaum for corrections, and to Ren\'e Adad and Joseph Najnudel for numerical experiments. 

The first author warmly thanks Francois Alouges, Michel Balazard, Paul Bourgade, Christophe Delaunay, Persi Diaconis, Bryce Kerr, Oleksiy Klurman, Joseph Najnudel, Olivier Ramar\'e and Kristian Seip for interesting references and insightful discussions.

\bigskip

%\newpage


\begin{thebibliography}{5}

%\bibitem[AS08]{AS08} N. Alon, J.H. Spencer. The probabilistic method. Third edition. With an appendix on the life and work of Paul Erd\"os.
%Wiley-Interscience Series in Discrete Mathematics and Optimization. John Wiley \& Sons, Inc., Hoboken, NJ, 2008.

\bibitem[ADH22]{ADH22} F. Alouges; S. Darses; E. Hillion. Polynomial approximations in a generalized Nyman-Beurling criterion. {\em J. Théor. Nombres Bordeaux} 34 (2022), no. 3, 767–785. 

\bibitem[AIK14]{AIK14}T. Arakawa; T. Ibukiyama; M. Kaneko. Bernoulli numbers and zeta functions. With an appendix by Don Zagier. Springer Monographs in Mathematics. Springer, Tokyo, 2014.

\bibitem[ABBRS19]{ABBRS19} L.-P. Arguin; D. Belius; P. Bourgade; M. Radziwiłł; K. Soundararajan. Maximum of the Riemann zeta function on a short interval of the critical line. {\em Comm. Pure Appl. Math.} 72 (2019), no. 3, 500–535.

\bibitem[Ap08]{Ap08} T. Apostol. A primer on Bernoulli numbers and polynomials. {\em Mathematics Magazine}, 81(3) (2008), 178-190.


\bibitem[ABB17]{ABB17} J.S. Auli; A. Bayad; M. Beck. Reciprocity theorems for Bettin-Conrey sums. {\em Acta Arith.} 181 (2017), no. 4, 297--319.


%\bibitem[BD99]{BD99}L. B\'aez-Duarte. A class of invariant unitary operators. {\em Adv. Math.}, 144 (1999), no. 1, 1--12.

\bibitem[BD03]{BD03} L. B{\'a}ez-Duarte. A strengthening of the Nyman-Beurling criterion for the Riemann hypothesis, {\em Rend. Mat. Ac. Lincei}, S. 9, 14 (2003) 1, 5-11.

%\bibitem[BD05]{BD05}L. B{\'a}ez-Duarte. A general strong Nyman-Beurling criterion for the Riemann hypothesis. {\em Publications de l'Institut Math\'ematique}, Nouvelle S\'erie, 78 (2005), pp. 117--125.

%\bibitem[BDBLS00]{BDBLS00} L. B{\'a}ez-Duarte, M. Balazard, B. Landreau and \'E. Saias. Notes sur la fonction $\zeta$ de Riemann, 3. (French) [Notes on the Riemann $\zeta$-function, 3] {\em Adv. Math.}, 149 (2000), no. 1, 130--144.


\bibitem[BDBLS05]{BDBLS05} L. B{\'a}ez-Duarte; M. Balazard, B. Landreau; \'E. Saias. \'{E}tude de l'autocorr{\'e}lation multiplicative de la fonction "partie fractionnaire". (French) {\em The Ramanujan Journal}, 9(1) (2005), pp. 215--240.

\bibitem[BCH85]{BCH85} R. Balasubramanian; J. B. Conrey; D. R. Heath-Brown. Asymptotic mean square
of the product of the Riemann zeta-function and a Dirichlet polynomial. {\em J. Reine Angew. Math.} 357 (1985) 161–181.

%\bibitem[Bal00]{Bal00} M. Balazard. Completeness problems and the Riemann hypothesis: an annotated bibliography. Number theory for the millennium, I (Urbana, IL, 2000), 21--48, A K Peters, Natick, MA, 2002. 



\bibitem[Bal18]{Bal18} M. Balazard. Application d'une formule de Binet. (French) {\em Personal communication to S. Darses}, 2018.

%\bibitem[BdR10]{BdR10} M. Balazard and A. de Roton. Sur un crit\`ere de B{\'a}ez-Duarte pour l'hypoth\`ese de Riemann. {\em International Journal of Number Theory}, 6(04) (2010), pp. 883--903.



\bibitem[BC13a]{BC13a} S. Bettin; J.B. Conrey. Period functions and cotangent sums. {\em Algebra and Number Theory} 2013, 7(1), 215-242.

\bibitem[BC13b]{BC13b} S. Bettin; J.B. Conrey. A reciprocity formula for a cotangent sum.
{\em Int. Math. Res. Not. IMRN} 2013, no. 24, 5709--5726.

\bibitem[BC13c]{BC13c} S. Bettin; J.B. Conrey. The second weighted moment of $\zeta$. Actes de la Conf\'erence "Th\'eorie des Nombres et Applications." {\em Pub. Math de Besan\c{c}on, Algèbre et Théorie des Nombres} 2013. 41-47.
		

%\bibitem[Bet15]{Bet15} S. Bettin. On the distribution of a cotangent sum, {\em Int. Math. Res. Notices} (2015) 2015 (21): 11419-11432.



%\bibitem[Con03]{Con03} J.B. Conrey. The Riemann hypothesis. {\em Notices Amer. Math. Soc.}, 50 (2003), no. 3, 341--353.


\bibitem[BISS18]{BISS18} A. Bondarenko; A. Ivić; E. Saksman; K. Seip.
On certain sums over ordinates of zeta-zeros II.
{\em Hardy-Ramanujan J.} 41 (2018), 85–97. 

%\bibitem[BT04]{BT04} R. de la Bret\`eche; G. Tenenbaum. S\'eries trigonom\'etriques a coefficients arithm\'etiques, {\em J. Anal. Math.} 92 (2004) 1–79.

\bibitem[Coh07]{Coh07} H. Cohen. Number Theory. Volume II: Analytic and Modern Tools. {\em Springer Science \& Business Media} (2007).

\bibitem[Com12]{Com12} L. Comtet. Advanced Combinatorics: The art of finite and infinite expansions. {\em Springer Science \& Business Media} (2012).

%\bibitem[CFKRS05]{CFKRS05} J.B. Conrey; D.W. Farmer; J.P. Keating; M. O. Rubinstein; N.C. Snaith. Integral moments of L-functions. {\em Proc. London Math. Soc.} 91 (2005), 33–104.

\bibitem[CK19]{CK19} B. Conrey; J.P. Keating. Moments of zeta and correlations of divisor-sums: V. {\em Proc. Lond. Math. Soc.} (3) 118 (2019), no. 4, 729–752.


\bibitem[DH21a]{DH21a} S. Darses; E. Hillion. An exponentially-averaged Vasyunin formula. {\em Proc. Amer. Math. Soc.} 149 (2021), no. 7, 2969–2982.

\bibitem[DH21b]{DH21b} S. Darses; E. Hillion. On probabilistic generalizations of the Nyman-Beurling criterion for the zeta function. {\em Confluentes Math.} 13 (2021), no. 1, 43–59.

\bibitem[DFMR13]{DFMR13} C. Delaunay; E. Fricain, E. Mosaki; O. Robert.  Zero-free regions for Dirichlet series. {\em Trans. Amer. Math. Soc.}, 365 (2013), no. 6, 3227--3253.

\bibitem[DKS08]{DKS08} P. Diaconis; K. Khare; L. Saloff-Coste. Gibbs sampling, exponential families and orthogonal polynomials. 
{\em Statist. Sci.} 23 (2008), no. 2, 151–178.

\bibitem[dR07]{dR07} A. de Roton. G\'en\'eralisation du crit\`ere de Beurling-Nyman pour l'hypoth\`ese de Riemann. [Generalization of the Beurling-Nyman criterion for the Riemann hypothesis] {\em Trans. Amer. Math. Soc.} 359 (2007), no. 12, 6111–6126.

\bibitem[DRRZ15]{DRRZ15} A. Dixit; N. Robles; A. Roy; A. Zaharescu. Zeros of combinations of the Riemann $\xi$-function on bounded vertical shifts. {\em J. Number Theory} 149 (2015), 404–434.

\bibitem[D11]{D11} A. Dixit. Analogues of a transformation formula of Ramanujan. {\em Int. J. Number Theory} 7 (2011) 1151–1172.

\bibitem[God15]{God15} R. Godement. Analysis III.
Analytic and Differential Functions, Manifolds and Riemann surfaces. (Translated from the 2002 French edition by Urmie Ray) Universitext, Springer, Cham, 2015. 

\bibitem[GKP94]{GKP94} R.L. Graham; D.E. Knuth; O. Patashnik. Concrete mathematics: a foundation for computer science. Computers in Physics, 3(5), 1994.

\bibitem[GQ14]{GQ14} B-N. Guo; F. Qi. Some identities and an explicit formula for Bernoulli and Stirling numbers. {\em J. Comput. Appl. Math.} 255 (2014), 568–579.

%\bibitem[Ing34]{Ing34} A.E. Ingham. A Note on Fourier Transforms. {\em J. London Math. Soc.} S1-9.1 (1934), p. 29–32.

\bibitem[HRS19]{HRS19} W. Heap; M. Radziwiłł; K. Soundararajan. Sharp upper bounds for fractional moments of the Riemann zeta function. {\em Q. J. Math.} 70 (2019), no. 4, 1387–1396.

%\bibitem[HB81]{HB81} D.R. Heath-Brown. Fractional moments of the Riemann zeta function. {\em J. London Math. Soc.} 24 (1981), 65–78.

\bibitem[IM06]{IM06} A. Ivić; Y. Motohashi. The moments of the Riemann zeta-function. I. The fourth moment off the critical line. {\em Funct. Approx. Comment. Math.} 35 (2006), 133–181.

\bibitem[Kim16]{Kim16} N. Kim. Ramanujan's integral identities of the Riemann $\Xi$-function and the Lerch transcendent. {\em J. Number Theory} 168 (2016), 292–305.


%\bibitem[L]{Lageler}Lageler
\bibitem[Kra07]{Kra07} I.V. Krasovsky. Correlations of the characteristic polynomials in the Gaussian unitary ensemble or a singular Hankel determinant. {\em Duke Math. J.} 139 (2007), no. 3, 581--619.

%\bibitem[MR15]{MR15} H. Maier; M.T. Rassias. The order of magnitude for moments for certain cotangent sums. {\em Journal of Math. Anal. and Appl.}, 429(1)(2015), 576-590.

\bibitem[LZ01]{LZ01} J. Lewis; D. Zagier. Period functions for Maass wave forms. I. {\em Ann. of Math.} (2) 153 (2001), no. 1, 191–258.


\bibitem[MR16]{MR16} H. Maier; M.T. Rassias. Generalizations of a cotangent sum associated to the Estermann zeta function, {\em  Communications in Contemporary Mathematics}, 18(1), 2016.



\bibitem[Naj20]{Naj20} J. Najnudel. Exponential moments of the argument of the Riemann zeta function on the critical line.
{\em Mathematika} 66 (2020), no. 3, 612–621. 

%\bibitem[Nym50]{Nym50} B. Nyman. On the one-dimensional translation group and semi-group in certain function spaces. {\em Thesis, University of Uppsala}, 1950.

%\bibitem[Swa19]{Swa19} C. Swaenepoel. Digits of prime numbers and other remarkable sequences. Thesis (2019)

\bibitem[Ram15]{Ram15} S. Ramanujan. New expressions for Riemann’s functions $\xi(s)$ and $\Xi(t)$. {\em Quart. J. Math}, 46, (1915) 253-260.

\bibitem[Sim98]{Sim98} B. Simon. The classical moment problem as a self-adjoint finite difference operator. {\em Adv. Math.} 137 (1998), no. 1, 82–203.

\bibitem[Ten22]{Ten22} G. Tenenbaum. Introduction \`a la th\'eorie analytique et probabiliste des nombres. {\em Dunod} (2022).

\bibitem[Tit86]{Tit86} E. C. Titchmarsh, The theory of the Riemann zeta-function, second ed., {\em The Clarendon Press Oxford University Press}, New York, 1986.

\bibitem[Vas95]{Vas95} V.I. Vasyunin. On a biorthogonal system associated with the Riemann hypothesis. (Russian) {\em Algebra i Analiz} 7, no. 3 (1995): 118-35; translation in {\em St. Petersburg Mathematical Journal} 7, no. 3 (1996): 405-19.

%\bibitem[W07]{W07} A. Weingartner. On a question of Balazard and Saias related to the Riemann hypothesis. {\em Adv. Math.} 208 (2007), no. 2, 905--908.


\bibitem[WW27]{WW27} E.T. Whittaker; G.N. Watson. A course of modern analysis, 4th ed., {\em Cambridge University Press}, 1927.

\end{thebibliography}
\end{document}